\documentclass[12pt]{amsart}

% Modif. February 26, 2020
% Send comments to publ@impan.pl

% Using pdflatex is preferred  [I DON'T UNDERSTAND THIS!!!]

\usepackage{amsmath, amssymb, enumerate, amscd, tikz, url, color, tikz-cd,mathtools}
%\usepackage[skip=2pt]{caption} % My preferred caption skip set to 2pt

%% Optional, but useful:  \usepackage{enumitem}

\makeatletter
\@namedef{subjclassname@2020}{% 
\textup{2020} Mathematics Subject Classification}
\makeatother

%% Numbered objects of "theorem" style (text italicized). You are welcome to use any other numbering system of your choice, as well as your own abbreviations.

\theoremstyle{plain}
\newtheorem{theo}[equation]{Theorem}
\newtheorem{lem}[equation]{Lemma}
\newtheorem{cor}[equation]{Corollary}
\newtheorem{prop}[equation]{Proposition}

\newtheorem{IntroThm}{Theorem}

%% Numbered objects of "non-theorem" style (text roman):
\theoremstyle{definition}
\newtheorem{Def}[equation]{Definition}
\newtheorem{Ex}[equation]{Example}
\newtheorem{Rem}[equation]{Remark} 
\newtheorem{Not}[equation]{Notation}

%% A numbered theorem with a fancy name (NOT USED):  \newtheorem{mainthm}[theorem]{Main Theorem}

%% An unnumbered object:  \newtheorem*{xrem}{Remark}

%%%%%%%%%%% For IMPAN journals:
\frenchspacing

\textwidth=13.5cm
\textheight=23cm
\parindent=16pt
\oddsidemargin=-0.5cm
\evensidemargin=-0.5cm
\topmargin=-0.5cm

%%%% Put your macros here:

\newcommand{\C}{{\mathbb C}}
\newcommand{\F}{{\mathbb F}}
    
\newcommand{\Q}{{\mathbb Q}}

\newcommand{\Z}{{\mathbb Z}}

\newcommand{\bfc}{{\mathbf c}}

\newcommand{\bfU}{{\bf U}}

\newcommand{\Mu}{\boldsymbol \mu}

\newcommand{\ga}{{\mathfrak a}}

\newcommand{\gC}{{\mathfrak C}}

\newcommand{\gf}{{\mathfrak f}}

\newcommand{\gm}{{\mathfrak m}}

\newcommand{\go}{{\mathfrak o}}
\newcommand{\gp}{{\mathfrak p}}
\newcommand{\gP}{{\mathfrak P}}
\newcommand{\gq}{{\mathfrak q}}
\newcommand{\gQ}{{\mathfrak Q}}

\newcommand{\cA}{{\mathcal A}}
\newcommand{\cB}{{\mathcal B}}
\newcommand{\cC}{{\mathcal C}}
\newcommand{\cD}{{\mathcal D}}
\newcommand{\cE}{{\mathcal E}}
\newcommand{\cF}{{\mathcal F}}
\newcommand{\cG}{{\mathcal G}}

\newcommand{\cI}{{\mathcal I}}

\newcommand{\cM}{{\mathcal M}}

\newcommand{\cO}{{\mathcal O}}

\newcommand{\cR}{{\mathcal R}}
\newcommand{\cS}{{\mathcal S}}

\newcommand{\cU}{{\mathcal U}}
\newcommand{\cV}{{\mathcal V}}

\newcommand{\cX}{{\mathcal X}}
\newcommand{\cY}{{\mathcal Y}}
\newcommand{\cZ}{{\mathcal Z}}

\newcommand{\rD}{{\rm D}}
\newcommand{\rG}{{\rm G}}
\newcommand{\rM}{{\rm M}}

\newcommand{\rX}{{\rm X}}

\newcommand{\core}{\operatorname{core}}

\newcommand{\End}{\operatorname{End}}
\newcommand{\Ext}{\operatorname{Ext}}

\newcommand{\Gal}{\operatorname{Gal}}
\newcommand{\gr}{\operatorname{\mathbf gr}}
\newcommand{\Hom}{\operatorname{Hom}}

\newcommand{\GL}{{\rm GL}}

\newcommand{\SL}{{\rm SL}}

\newcommand{\Tr}{\operatorname{Tr}}
\newcommand{\res}{\operatorname{res}}

\newcommand{\ord}{\operatorname{ord}}

\newcommand{\un}[1]{\underline{#1}}
\newcommand{\ov}[1]{\overline{#1}}
\newcommand{\fdeg}[2]{[{#1}\!:\!{#2}]}
\newcommand{\lr}[1]{\langle{#1}\rangle}
\newcommand{\vv}[1]{\left\vert{#1}\right\vert}
 
\newcommand{\mymod}[1]{\hspace{2 pt} ({#1})}

\makeatletter
\renewcommand*\l@subsection{\@tocline{2}{0pt}{30pt}{0pt}{}}
\makeatother

%%%%%%%%%%%%%

\begin{document}

%%%%% To ease editing, for IMPAN journals add:

\baselineskip=17pt

%%%%%%%%%%%%%%%%

\title[Prosaic Abelian Varieties]
{Prosaic Abelian Varieties Bad at One Prime}

\author[A. Brumer]{Armand Brumer}
\address{Department of Mathematics, Fordham University, Bronx, NY 10458, USA}
\email{brumer@fordham.edu}

\author[K. Kramer]{Kenneth Kramer}
\address{Department of Mathematics, Queens College (CUNY), Flushing, NY 11367, USA;  Department of Mathematics, The Graduate Center of CUNY, New York, NY 10016, USA}
\email{kkramer@qc.cuny.edu}

\date{20 May 2023, \, Updated 16 Sept 2025.}

\begin{abstract}
We say that an abelian variety $A_{/\Q}$ of dimension $g$ is {\em prosaic} if it is semistable, with good reduction at 2 and its points of order $2$ generate a $2$-extension of $\Q$.  

For $p \equiv 1 \mymod{8}$, let $M_u$ be the maximal 2-primary unramified abelian extension of $K = \Q(\sqrt{-p})$ and let $h_2 = \fdeg{M_u}{K}$.  We construct an indecomposable group scheme $\Xi_p$  over $\Z[\frac{1}{p}]$ of exponent 2 with field of points $M_u$.  

Assume that  $A$ is prosaic, with bad reduction at only one prime $p$.  Then $p \equiv 1 \mymod{8}$ and $A$ is totally toroidal at $p$.  We prove that if $\End A=\Z$, then there is a $\Q$-isogenous abelian variety $B$ such that $B[2]$ is a subquotient of $\Xi_p$.  We thereby show that $2g+2 \le h_2$ and $p$ has the form $a^2+16b^2$, with $a+4b \equiv \pm 1 \mymod{8}$.  Moreover, if $2g + 4 \le h_2$, then $p$ has the form $a^2+64b^2$, with $a \equiv \pm 1 \mymod{8}$.
\end{abstract}

\subjclass[2020]{Primary 14K15,  11G10;   Secondary 11R31}

%\keywords{aaaa, bbbb, cccc  }

\maketitle

\tableofcontents

\numberwithin{equation}{section}

\section{Introduction}
Neumann \cite{Neu} and Setzer \cite{Set} independently showed that there are elliptic curves over $\Q$ of prime conductor $p$ whose points of order 2 generate a 2-extension of $\Q$ if and only if $p = 17$ or $p$ has the form $p = u^2+64$ for some integer $u$.   Curiosity about higher dimension led us to the following definition. 

Let $A_{/\Q}$ be an abelian variety with good reduction at 2 and let $A[2]$ be the kernel of multiplication by 2.  We say that $A$ is {\em prosaic} if it is {\em semistable}, with {\em good reduction at $2$} and the 2-division field $\Q(A[2])$ is a 2-{\em extension} of   $\Q$.  These properties are preserved  by taking products or quotients and under isogeny by Proposition \ref{IsogenyInvariant}.  We focus on abelian varieties $A_{/\Q}$ such that 
\begin{equation}\label{AIntro}
\fbox{\parbox{340 pt}{ $A \text{ is simple, prosaic, has good reduction outside } p \text{ and} \, \dim A = g. $}}
\end{equation}

 Our work owes much to that of  Fontaine, Abrashkin and Schoof, who classified semistable abelian varieties with good reduction outside the set of primes dividing a small integer $N$.  In several instances, they used the Odlyzko discriminant bounds to show that the abelian varieties they considered are prosaic. 

To study putative abelian varieties $A$ satisfying \eqref{AIntro}, we require some number theory of $K = \Q(\sqrt{-p})$, reviewed in \S \ref{Arith}.  Let $M_u$ be the maximal unramified 2-primary extension of $K$ and let $h_2 = \fdeg{M_u}{K}$.  

In \S \ref{GrpSchms}, we define a full subcategory $\cA$ of the category $\un{Gr}$ of finite flat 2-primary group schemes over $\Z[\frac{1}{p}]$ whose objects include the subquotients of $A[2^n]$ for all $n$.  In addition to the constant group scheme $\cZ_2 = \Z/2\Z$ and its dual $\Mu_2$, the category $\cA$ contains certain group schemes $\Phi$ and $G_{-1}$ of exponent 2, representing non-trivial extension classes of $\Mu_2$ by $\cZ_2$ and of $\cZ_2$ by $\Mu_2$, respectively.  

In \S \ref{dPhiSection}, we introduce a group scheme $\Xi_p$ of exponent 2 in $\cA$ whose Galois module is indecomposable and cyclic over the group ring $\F_2[G_\Q].$  The group scheme $\Xi_p$ has a filtration with successive quotients isomorphic to $G_{-1}$, field of points  $M_u$ and $\F_2$-dimension $h_2$.  

Define a $(d,\Phi)$-bloc to be an object of exponent 2 in $\cA$ admitting a filtration with $d$ successive quotients isomorphic to $\Phi$, whose Galois module is cyclic.  Certain subquotients of $\Xi_p$ are examples of $(d,\Phi)$-blocs.  The following consequence of Theorem \ref{dPhiThm} provides necessary and sufficient conditions for existence of a $(d,\Phi)$-bloc.  Theorem \ref{dPhiThm} also gives additional constraints on $h_2$ and the prime $p$.

\begin{IntroThm} 
There is a $(d,\Phi)$-bloc in $\cA$ if and only if $2d+2 \le h_2$.
\end{IntroThm}

In \S \ref{AbVar}, our results about group schemes are applied to abelian varieties $A$ satisfying \eqref{AIntro}.  Then Theorems \ref{1Mod8} and \ref{A[2]bloc} provide the following information.

\begin{IntroThm}  \label{IntroAThm}
If $A$ sastisfies {\rm \eqref{AIntro}}, $g \ge 2$ and $\End A = \Z$, then: 
\begin{enumerate}[{\rm i)}]
\item $A$ is totally toroidal at $p$; that is, the conductor of $A$ is $p^g$.  \vspace{2 pt}
\item There is a $\Q$-isogenous abelian variety $B$ such that $B[2]$ is $(g,\Phi)$-bloc. \vspace{2 pt}
\item $2g+2 \le h_2$ and $p$ has the form $a^2+16b^2$, with $a+4b \equiv \pm 1 \mymod{8}$.
\end{enumerate}
If $g=2$ and more generally, if $2g+4 \le h_2$, then $p$ has the form $a^2+64b^2$ with $a \equiv \pm 1 \mymod{8}$.
\end{IntroThm}
 
We do not know of any abelian variety $A$ satisfying \eqref{AIntro} when $g \ge 2$ and $\End  A = \Z$.  As a consequence of Theorem \ref{IntroAThm} and Remark \ref{P1*}, the non-existence of such abelian surfaces is verified for a set of  primes $p$ of asymptotic density 15/16.  Below 1000, only  the status of $p= 113$, 257, 337, 353, 577, 593 and 881 is unknown.  Even assuming our paramodular conjecture, searching for the corresponding paramodular forms of level $p^2$ does not seem  currently feasible.

In contrast, Ribet \cite{Rib3} proved that the modularity of RM abelian varieties follows from Serre's conjecture.  As a result, we verified that for $2 \le g \le 20$ and $p<10^6$, there are only three RM abelian varieties up to isogeny satisfying \eqref{AIntro}.  Details are given in Remark \ref{RMExamples}.  In \cite{BK4}, somewhat different methods are used to show that the conclusions of Theorem \ref{IntroAThm} remain valid for RM abelian varieties satisfying \eqref{AIntro}.

If $A$ is a prosaic abelian variety of dimension $g$ and odd conductor $N$, then $N$ is the product of at least $g$ primes counting multiplicities, by \cite[Cor.\! 5.4]{BK3}.  It is fairly easy to produce examples  if one allows more than one place of bad reduction. In \cite{BK4}, we exhibit six two-parameter families of prosaic abelian surfaces with $\End  A = \Z$.  Under the Schinzel hypothesis, they yield infinitely many prosaic Jacobians of conductor $pq$, for primes $p \ne q$.  

Fix the completion $\ov{\Q}_2$ of an algebraic closure of $\Q_2$, which contains all the algebraic extensions of $\Q$ or $\Q_2$ considered here.

\numberwithin{equation}{section}
\section{Input from higher arithmetic} \label{Arith}

Throughout  this section, $p$ is prime and $p \equiv 1 \mymod{8}$.  Then $p$ is a sum of squares and $p \equiv 1 \mymod{8}$ is equivalent to $p = a^2+16b^2$ for some integers $a,b$ by congruences modulo 8.  Unique factorization in $\Z[i]$ implies that $a$ and $b$ are determined up to sign and so the following sets are well-defined:
\begin{equation}  \label{P1P3}
\begin{array}{l l}
\text{\bf P1} = \{\text{primes } p \equiv 1 \mymod{8} \mid a+4b \equiv \pm 1 \mymod{8} \}, \\[3 pt]
 \text{\bf P3} = \{\text{primes } p \equiv 1 \mymod{8} \mid a+4b \equiv \pm 3 \mymod{8} \}.
\end{array}
\end{equation} 
Let $\cO_K$ be the ring of integers of $K = \Q(\sqrt{-p})$.  Write $H_2$ for the 2-Sylow subgroup of the ideal class group of $K$ and $h_2 = h_2(-p)$ for its order.   According to \cite{RR,BC}:
\begin{equation} \label{h2}
h_2 = 4 \text{ if } p \text{ is in {\bf P3}} \, \text{ and } \, 8 \vert{h_2} \text{ if } p \text { is in {\bf P1}}.
\end{equation}

Conventions of \cite[IV]{Ser} will be used for the numbering of higher ramification groups in the following local situation.  Let $\tilde{L}$ be a finite extension of $\Q_2$ with ring of integers $\tilde{\cO}$ and prime ideal $\tilde{\lambda}$.  Suppose that $\tilde{L} \supseteq \tilde{K} \supseteq \Q_2$, with $\tilde{L}$ Galois over $\tilde{K}$ and $G = \Gal(\tilde{L}/\tilde{K})$.  For  $j \ge 0$, the lower numbering is given by
$$
G_j = \{g \in G \mid g(x) - x \in \tilde{\lambda}^{j+1} \text{ for all } x \in \tilde{\cO} \}.
$$ 
In the upper numbering, $G^{\varphi(j)} = G_j$, using the Herbrand function:
$$
\varphi(u) = \int_0^u \frac{dt}{\fdeg{G_0}{G_t}} \, .
$$
In particular, $G_0 = G^0 = \cI_{\tilde{\lambda}}(\tilde{L}/\tilde{K})$ is the inertia subgroup of $G$.  For $j \ge 1$, the successive quotients $G_j/G_{j+1}$ are elementary abelian $2$-groups, possibly trivial.

\begin{prop} \label{RayClass1a}
Let $M$ be the maximal $2$-primary abelian extension of $K = \Q(\sqrt{-p})$ with ray class modulus $2 \cO_K$ and let $h_2$ be the $2$-primary factor of the class number of $K$.  Then $\Gal(M/K)$ is cyclic of order $2h_2$.  At places $\lambda_M$ over $2$ in $M$, the ramification group $\cI_{\lambda_M}(M/K)_2 = \{1\}$ in the lower numbering and 
$
\cI_{\lambda_M}(M/K)^{1+\epsilon} = \{1\}
$
for every $\epsilon > 0$ in the upper numbering.
\end{prop}

\begin{proof}
Denote the idele group of $K$ by $J_K$. Let $U_w$ be group of units in the completion $K_w$ at a non-archimedean prime $w$ and let $\lambda_K$ be the prime over 2 in $K$.  Let $F$ be the maximal unramified abelian extension of $K$, i.e., the Hilbert class field of $K$.  Then $F$ is a subfield of the abelian extension $L$ of $K$ with modulus $2\cO_K = \lambda_K^2$ and the norm group $N_{L/K}(J_L)$ contains at least the following local components:
\begin{enumerate}[\hspace{2 pt} $\cdot$]
\item the unit group $U_w$ for non-archimedean primes $w \ne \lambda_K$,  \vspace{2 pt}

\item $U_{\lambda_K}^{(2)} = \{ u \in U_{\lambda_K} \mid  u \equiv 1 \bmod{\lambda_K^2} \}$, with $U_{\lambda_K} \simeq \Z_2[i]^\times$, \vspace{2 pt} 

\item $\C^\times$ at the archimedean place of $K$.
\end{enumerate}
Let 
$
\bfU = U_{\lambda_K} \times \C^\times \times \prod_{w \ne \lambda_K} U_w
$
be the group of unit ideles in $J_K$.  Class field theory gives the vertical isomorphisms in the diagram:
\begin{equation} \label{cft1}
\begin{tikzcd}
J_K/(U_{\lambda_K}^{(2)} \times \C^\times \times \prod_{w \ne \lambda_K} U_w) K^\times  \ar["\psi"  , r] \ar["\simeq", d] &J_K/\bfU  K^\times  \ar["\simeq", d]  \\
\Gal(L/K) \ar["{\rm restriction}",r]  & \Gal(F/K). 
\end{tikzcd}
\end{equation}
Surjectivity of $\psi$ is clear, as is the surjection
$$
j\!: \, U_{\lambda_K}/U_{\lambda_K}^{(2)} \to \ker \psi = \bfU \, K^\times/(U_{\lambda_K}^{(2)} \times \C^\times \times \prod_{w \ne \lambda_K} U_w) \, K^\times
$$
induced by $a \mapsto (a,1,1, \dots)$ for $a$ in $U_{\lambda_K}$.  The coset $aU_{\lambda_K}^{(2)}$ is in $\ker j$ exactly when
$$
(a,1,1, \dots) = (b,c, \prod_{w \ne \lambda_K} u_w)k
$$
with $b \in U_{\lambda_K}^{(2)}$, $c \in \C^\times$, $u_w \in U_w$ and $k \in K^\times$.  Then $k$ is a global unit in $K$, so $k = \pm 1$.  Thus $aU_{\lambda_K}^{(2)}$ is in $\ker j$ if and only if $a = \pm b$.  But $-1 = 1-2$ is in $U_{\lambda_K}^{(2)}$, since the ideal $\lambda_K^2 = 2\cO_K$ and so $a$ is in $U_{\lambda_K}^{(2)}$.   Hence $j$ is injective and gives an isomorphism $U_{\lambda_K}/U_{\lambda_K}^{(2)} \simeq \ker \psi$.

The residue field $\cO_K/\lambda_K \simeq \F_2$, so $\fdeg{U_K}{U_{\lambda_K}^{(2)}} = 2$.  Hence $\fdeg{L}{K} = 2h$, where $h = \fdeg{F}{K}$ is the class number of $K$.  If $M_u$ is the maximal 2-primary extension of $K$ in $F$, then $\fdeg{M_u}{K} = h_2$ and so $\fdeg{M}{K} = 2h_2$.  Recall that by genus theory, the maximal {\em elementary} 2-extension of $K$ in $M_u$ is cyclic, given by $M'_u = \Q(i,\sqrt{p})$, so  $M_u/K$ also is cyclic.  

To establish that $M/K$ is cyclic, we will show that the maximal elementary 2-extension $M'$ of $K$ in $M$ is $M'_u$.  Because $p \equiv 1 \mymod{8}$, there is an isomorphism of $K_{\lambda_K}$ to $\Q_2(i)$ and $1-i$ is a prime element in $\Q_2(i)$.  Then $U_{\lambda_K}/U_{\lambda_K}^{(2)}$ is generated by the coset of 
$$
\alpha = 1+(1-i) = 2-i = \frac{1}{2} \, (1+2i) \, (1-i)^2 \in \frac{1}{2} \, U_{\lambda_K}^{(2)} \, K_{\lambda_K}^{\times 2}.
$$ 
In $J_K$, we therefore have
$$
\begin{array}{l l l}
(\alpha, 1, 1, \dots) &=& ( \, (1+2i)(1-i)^2, 1, 1,\dots) \, (1, 2, 2, \dots) \, (\frac{1}{2}, \frac{1}{2}, \dots) \\[3 pt] 
&\in& (U_{\lambda_K}^{(2)} K_{\lambda_K}^{\times 2} , 1, 1, \dots)  \, (\{1\} \times \C^\times \times \prod_{w \ne {\lambda_K}} U_w) \,K^\times) \\[3 pt]
&\in& \left(U_{\lambda_K}^{(2)} \times \C^\times \times \prod_{w \ne {\lambda_K}} U_w \right) J_K^2 \, K^\times.
\end{array}
$$
Hence the image of $j$ becomes trivial modulo $J_K^2$.  It follows that if we reduce the top row of \eqref{cft1} modulo $J_K^2$, then $\psi$ induces an isomorphism $\Gal(M'/K) \simeq \Gal(M'_u/K)$.   Since $M'_u$ is contained in $M'$, we find that
\begin{equation} \label{M'}
M' = M'_u = \Q(i,\sqrt{p})
\end{equation}
is quadratic over $K$ and so $M/K$ is cyclic.

In the upper numbering, the ramification group $\cI_{\lambda_M}(M/K)^{1+\epsilon} = \{1\}$ for every $\epsilon > 0$, since the ray class conductor exponent $\gf_{\lambda_M}(M/K) = 2$.  In particular, let $\epsilon_0 = \vv{\cI_{\lambda_M}(M/K)_2}/\vv{\cI_{\lambda_M}(M/K)_1}$.  Then the Herbrand function $\varphi(2) = 1+\epsilon_0$, so $\cI_{\lambda_M}(M/K)_2 = \cI_{\lambda_M}(M/K)^{1+\epsilon_0} = \{1\}$.
\end{proof}

As in Proposition \ref{RayClass1a}, let $M$ be the maximal $2$-primary abelian extension of $K$ with ray class modulus $2 \cO_K$, let $M_u$ be the maximal subfield of $M$ unramified over $K$ and let $M_s$ be the maximal subfield of $M_u$ in which $\lambda_K$ splits completely.  By maximality, $M$, $M_u$ and $M_s$ are Galois over $\Q$.  Fix a prime $\lambda_M$ over $\lambda_K$ in $M$ and write $\lambda_u$ and $\lambda_s$ for the primes below it in $M_u$ and $M_s$.  Fix $v$ over $p$ in $M$. Additional properties of $M/K$ are addressed in the following Corollary.

\begin{cor}  \label{RayClass1b}
The ideal $2\cO_K = \lambda_K^2$ and $\lambda_K$ factors as follows:
\begin{equation} \label{fieldtower}
\Q \hookrightarrow K \xhookrightarrow{\rm split} M_s  \xhookrightarrow{\rm inert} M_u  \xhookrightarrow{\rm ramified} M,
\end{equation}
with $\fdeg{M_u}{M_s} = 2$, $\fdeg{M}{M_u} = 2$ and $\Gal(M/K) \simeq \Z/2^{2h_2}\Z$.
The following properties hold.
\begin{enumerate}[{\rm  i)}]
\item  The inertia group $\cI_v(M/\Q) = \lr{\sigma_v}$ at $v$ has order $2$.  \vspace{2 pt}
\item  The inertia group $\cI_{\lambda_M}(M/\Q)$ contains an involution $\sigma_{\lambda_M}$ satisfying $\sigma_{\lambda_M}(i) = -i$ and $\sigma_{\lambda_M}(\sqrt{p}) = \sqrt{p}$.    
\vspace{2 pt}
\item $\Gal(M/\Q) = \lr{\sigma_v, \sigma_{\lambda_M}}$ is isomorphic to the dihedral group of order $4h_2$, with cyclic subgroup $\Gal(M/K)$ of order $2h_2$ generated by $\tau= \sigma_{\lambda_M}  \sigma_v$.  \vspace{2 pt} 
\item The maximal subfield of $M$ abelian over $\Q$ is $\Q(i, \sqrt{p})$.   \vspace{2 pt}
\item  The Frobenius $\phi_{\lambda_u}$ at $\lambda_u$ is the unique involution in $\Gal(M_u/K)$ and the residue degree of $\lambda_M$ in $M/\Q$ is $2$. 
\end{enumerate}
\end{cor} 

\begin{proof}
Since $v$ is unramified in $M/K$, the inertia group $\cI_v(M/\Q)$ has order 2, so item (i) holds.  To ease notation, let $\lambda = \lambda_M$.

By Proposition \ref{RayClass1a}, the ramification index of ${\lambda}$ in $M/K$ is 2, so the inertia group $\cI_{\lambda}(M/\Q)$ has order 4 and equals the first ramification group  $\cI_{\lambda}(M/\Q)_1$.  Furthermore, it contains an element $\sigma_{\lambda}$ such that $\sigma_{\lambda}(i) = -i$ and $\sigma_{\lambda}(\sqrt{p}) = \sqrt{p}$.  Since successive quotients in the lower numbering are elementary 2-groups, $\sigma_{\lambda}^2$ is in $\cI_{\lambda}(M/\Q)_2$.  But $\sigma_{\lambda}^2$ also is trivial on $K$, so it is in 
$$
 \cI_{\lambda}(M/\Q)_2 \cap \Gal(M/K) = \cI_{\lambda}(M/K)_2 = \{1\}.
$$
Hence, $\sigma_{\lambda}$ is an involution.  This verifies (ii).

Let $M''$ be the maximal elementary 2-extension of $\Q$ inside $M$.  Then   $M''$ is contained $M'$ by definition of $M'$.  By \eqref{M'}, $M' = \Q(i,\sqrt{p})$ and so $M'' = M'$.  Since $\Gal(M/\Q)$ is a 2-group and the restrictions of $\sigma_{\lambda}$ and $\sigma_v$  generate $\Gal(M''/\Q)$, the Burnside basis theorem \cite[Thm. 12.2.1]{Ha} implies that $\Gal(M/\Q) = \lr{\sigma_{\lambda},\sigma_v}$. In addition, $\Gal(M/\Q) = \lr{\sigma_{\lambda}, \tau}$, where $\tau = \sigma_{\lambda}  \sigma_v$.  The relation
$$
\sigma_{\lambda}^{-1} \tau \sigma_{\lambda} = \sigma_v \sigma_{\lambda} = \tau^{-1},
$$
shows that $\Gal(M/\Q)$ is dihedral and so (iii) holds.  It also follows that $M''$ is the maximal subfield of $M$ abelian over $\Q$, so (iv) holds.

To complete the factorization over 2 in \eqref{fieldtower}, note that $\lambda_K$ is not principal in $\cO_K$, but $\lambda_K^2 = 2\cO_K$ and so a Frobenius $\phi = \phi_{\lambda_u}$ in the decomposition group $\cD_{\lambda_u}(M_u/K)$ has order 2.  Thus, $\phi$ is the unique involution in the cyclic group $\Gal(M_u/K)$ and $\fdeg{M_u}{M_s} = 2$.  It follows that the residue degree of $\lambda_M$ over $\Q$ also is 2.
\end{proof}

\begin{Rem}  \label{P1*}
In \S \ref{AbVar}, the following subset of {\bf P1} is used:
\begin{equation} \label{RemP1*}
\begin{array}{lll}
{\bf P1^*} &=& \{ \text{primes } p \text{ of the form } a^2+64c^2 \mid a \equiv \pm 1 \mymod{8}\} \\[3 pt]
                &=& \{ p \in {\bf P1} \mid p \equiv 1 \mymod{16}\}.
\end{array}
\end{equation}
It is well-known \cite{BC} that primes $p \equiv 1 \mymod{8}$ are in {\bf P1} if and only if they split completely in $\Q(\zeta_8, \sqrt{1-i})/\Q$ and so they have asymptotic density 1/8.  A prime $p$ in {\bf P1} belongs to ${\bf P1^*}$ if and only if it also satisfies $p \equiv 1 \mymod{16}$.  Equivalently, $p$ splits completely in $\Q(\zeta_{16},\sqrt{1-i})/\Q$.  Hence, asymptotically among primes in {\bf P1}, half are in ${\bf P1^*}$.
\end{Rem}

\numberwithin{equation}{section}

\section{Some $2$-primary group schemes and their extensions} \label{GrpSchms}

Throughout  this section, $p$ is prime and $p \equiv 1 \mymod{8}$.    Let $\un{Gr}$ be the category of finite flat 2-primary commutative group schemes $V$ over $\Z[\frac{1}{p}]$.  Write $V(\ov{\Q})$ for the Galois module of $V$ and $\Q(V)$ for the field generated by the points of $V(\ov{\Q})$.  Let the {\em prosaic category} $\cA$ be the full subcategory of $\un{Gr}$ whose objects satisfy the following conditions:
\begin{enumerate}
\item[$\cA1$.] The inertia group $\cI_v(\Q(V)/\Q) = \lr{\sigma_v}$ is cyclic at each place $v$ over $p$ and $(\sigma_v-1)^2(V(\ov{\Q}) )= 0$. \vspace{2 pt}
\item[$\cA2$.] The Galois group $\Gal(\Q(V)/\Q)$ is a 2-group.   \vspace{2 pt}
\end{enumerate}

Condition $\cA1$ reflects Grothendieck's  criterion for semistability \cite[Prop.\! 5.13, p.\! 70]{Gro}.  It follows from the definition of a prosaic abelian variety $A_{/\Q}$ that $A[2^n]$ is an object in $\cA$ for all $n \ge 1$.

Schoof \cite[pp. 5]{Sch4} describes the full subcategory $\un{C}$ of $\un{Gr}$ whose objects satisfy $\cA1$.  (Also see \cite[\S2]{Sch2}, where the relevant category is called $\un{D}$.)   Closed  finite flat subgroup schemes of objects of $\un{C}$ also are in $\un{C}$, as are quotients by such subgroup schemes and binary products.  If $G$ is an object of $\un{C}$, then the Cartier dual $G^\vee$ is again an object of $\un{C}$.  If $G$ and $G'$ are in $\un{C}$, then the group $\Ext^1_{\un{Gr}}(G,G')$ classifies extensions of $G$ by $G'$ in the category $\un{Gr}$, with the group operation being Baer sum.  The subset $\Ext^1_{\un{C}}(G,G')$ of $\Ext^1_{\un{Gr}}(G,G')$ comprised of extension classes that are represented by short exact sequences of objects in $\un{C}$ is a subgroup \cite[\S2.3]{Sch2}.

Let $V_1$ and $V_2$ be objects of $\cA$ and let the exact sequence
\begin{equation} \label{ExtensionClass}   
0 \to V_1 \to V \to V_2 \to 0
\end{equation}
represent a class in $\Ext^1_{\un{C}}(V_1,V_2)$.  Since $\Gal(\Q(V)/\Q)$ necessarily is a 2-group, \eqref{ExtensionClass} also represents a class in $\Ext^1_\cA(V_1,V_2)$.  It follows that $$
\Ext^1_\cA(V_1,V_2)= \Ext^1_{\un{C}}(V_1,V_2).
$$  
Associated to a short exact sequence in $\un{C}$, one has the long exact sequences of Hom-Ext \cite[\S8]{Sch2}. It follows that the same is true for the category $\cA$.  If $W$ is in $\cA$, then \eqref{ExtensionClass} leads to the exact sequence   \vspace{2 pt}
\begin{equation*}
\begin{array}{l}  
\Hom_{\cA}(W,V_2)  \leftarrow \Hom_{\cA}(W,V) \leftarrow \Hom_{\cA}(W,V_1) \leftarrow 0 \\
 \hspace{35 pt}  \downarrow  \\
   \Ext^1_{\cA}(W,V_1)   \to  \Ext^1_{\cA}(W,V) \to \Ext^1_{\cA}(W,V_2) \to \dots 
\end{array}
\end{equation*}
for $\Hom_\cA(W,-)$.  The contravariant sequence for $\Hom_\cA(-,W)$ also holds.

If $2V_1 = 2V_2 = 0$, let $\Ext^1_{\cA,[2]}(V_1,V_2)$ be the subgroup of $\Ext^1_\cA(V_1,V_2)$ comprising extension classes represented by \eqref{ExtensionClass} with $2V = 0$.  

\begin{Not}
A {\em flag} of group schemes is an increasing filtration 
\begin{equation} \label{flag}
\cF\!:  \quad 0 = V_0 \subset V_1 \subset \cdots \subset V_{n-1} \subset V_n = V
\end{equation}
of objects in $\cA$ such that the inclusions are closed immersions $V_{j-1} \hookrightarrow V_j$ for $j = 1, \dots n$.  Write $\gr(\cF) = [V_1/V_0, \dots, V_n/V_{n-1}]$ for the list of successive quotients.  We may specify $\gr(\cF)$, without making the filtration $\cF$ explicit.
\end{Not}  

Next, we recall a result of Raynaud \cite[\S2.1]{Ray} that holds for finite flat group schemes $\cG$ over a Dedekind domain $S$ with field of fractions $F$.  Also see \cite[Ex. 7.4.4]{BrCo},  \cite[Lemma, p. 45]{Vo},  \cite[Lemma 1.1.1]{Con}.  

\begin{lem} \label{GalToGrpSch}
Let $G = \cG \otimes_{\cS} F$ be the generic fiber of $\cG$ and let $G_1$ be a subquotient of $G$; i.e., $G_1 = G''/G'$ for closed immersions 
$
G' \hookrightarrow G'' \hookrightarrow G
$
of finite flat group schemes over $F$.  Then there are unique closed immersions 
$
\cG' \hookrightarrow \cG'' \hookrightarrow \cG
$
of finite flat group schemes over $\cS$ such that 
$$
\cG'' \otimes_S F = G'', \hspace{10 pt} \cG' \otimes_S F = G'
$$ 
and a unique isomorphism $(\cG''/\cG') \otimes_S F \simeq G_1$, compatible with $G''/G' = G_1$.
\end{lem}

\begin{prop} \label{ZMuFil}
The only simple objects in $\cA$ are the \'etale group scheme $\cZ_2 = \Z/2\Z$ and its Cartier dual, $\Mu_2$.  A commutative finite flat $2$-primary group scheme $V$ over $\Z[\frac{1}{p}]$ satisfies $\cA2$ if and only if it has a filtration by closed subgroup schemes with successive quotients isomorphic to $\Mu_2$ or $\cZ_2$.  
\end{prop}

\begin{proof}
If $V$ has such a filtration, then $\Gal(\Q(V)/\Q)$ certainly is a 2-group.  For the converse, recall that there always is a non-trivial fixed point for the action of a 2-group on a finite dimensional $\F_2$-vector space.  Hence the Galois module $V(\ov{\Q})$ admits an increasing filtration of Galois submodules
$$
0 = W_0 \subset W_1 \subset \dots \subset W_n = V(\ov{\Q})
$$ 
in which $\vv{W_j/W_{j-1}} = 2$ for $j = 1, \dots, n$.  Let $V_j$ be the closed finite flat subgroup scheme of $V$ corresponding to $W_j$ by Lemma \ref{GalToGrpSch}.  Then $V_j/V_{j-1}$ is isomorphic to $\Mu_2$ or $\cZ_2$ by the classification of Oort-Tate \cite{TO}.
\end{proof}

\begin{lem} \cite[Prop.~2.3]{Sch1}.     \label{Patch}  
Let $R = \Z[\frac{1}{p}]$ and $R' = \Z[\frac{1}{2p}]$.  Let $\cB$ be the category of triples $(U,W,\theta)$, where $U$ is a finite flat group scheme over $\Z_2$, $W$ is a finite flat group scheme over $R'$ and 
$$
\theta\!: \,U \otimes_{\Z_2} \Q_2 \to W \otimes_{R'} \Q_2
$$ 
is an isomorphism of group schemes over $\Q_2$.  Then the functor 
$$
\un{Gr} \to \cB \quad \text{by} \quad V \mapsto (V\otimes_R \Z_2,V\otimes_R R', id \otimes_R \Q_2)
$$ 
is an equivalence of categories. For objects $V$ and $V'$ of $\un{Gr}$ to be isomorphic it is sufficient that:
\begin{enumerate}[{\rm i)}]
\item $V$ and $V'$ are isomorphic over $\Z_2$ and over $R'$ and
\item the map $\Hom_{\Z_2}(V,V') \to \Hom_{\Q_2}(V,V')$ is surjective.\end{enumerate}
%The surjectivity criterion (ii) for isomorphism is explicit in Sch5 = Good over quadratic fields, p.9 but that paper is not public.
\end{lem}

In the lemma above, $V \otimes_R R'$ is \'{e}tale, so it can be identified with the Galois module $V(\ov{\Q})$ associated to $V$.  From now on, {\em group scheme} should be taken to mean {\em finite flat commutative group scheme in $\cA$}, unless otherwise indicated.   Note that the additional requirements for objects $V$ of $\un{Gr}$ to be in $\cA$ are conditions on $V(\ov{\Q})$. 

\begin{Not} \label{Vm}
Let $V$ be a group scheme in $\cA$ and let $\widetilde{V} = V \otimes_R \, \Z_2$.  Denote the multiplicative subgroup scheme and the \'{e}tale quotient of $\widetilde{V}$ by $\widetilde{V}^m$ and $\widetilde{V}^{et}$, respectively.  Let  $L_\lambda$ be the completion of $L = \Q(V)$ at a fixed prime $\lambda$ over 2 and let $\cD_\lambda(L/\Q) \simeq \Gal(L_\lambda/\Q_2)$ be the decomposition group of $\lambda$ inside $\Gal(L/\Q)$.  Let $V^m(L,\lambda)$ be the $\cD_\lambda(L/\Q)$-submodule of the global Galois module $V(L)$ corresponding to $\widetilde{V}^m$.  Note that the points of $V^m(L,\lambda)$ are in $L$.  By assumption, $\Gal(L/\Q)$ is a 2-group, so the connected component $\widetilde{V}^0 = \widetilde{V}^m$.  Hence $\widetilde{V}^{et} = \widetilde{V}/\widetilde{V}^m$ and  $V^{et}(L,\lambda) = V(L)/V^m(L,\lambda)$ as a $\cD_\lambda(L/\Q)$-module. 
\end{Not}

\begin{lem} \label{InertiaOnVm}
Let $V$ be an object of $\cA$ and $L = \Q(V)$.  If $\sigma$ is an element of the inertia group $\cI_\lambda(L/\Q)$ at $\lambda$ over $2$, then 
\begin{enumerate}[{\, \rm i)}]
\item $\sigma$ acts trivially on $V^{et}(L,\lambda)$ and $(\sigma-1)(V(L)) \subseteq V^m(L,\lambda)$.
\item $\sigma$ acts on $V^m(L,\lambda)$ via the cyclotomic character.
\end{enumerate}
\end{lem}

\begin{proof}
Item (i) follows from trivial action of the inertia group on the points of an \'{e}tale group scheme over $\Z_2$.  For item (ii), let $W$ be the Cartier dual of $V^m$.  Then $W$ is \'{e}tale and gives a perfect $\cD_\lambda(L/\Q)$-equivariant pairing $V^m(L,\lambda) \times W(L_\lambda) \to \Mu_{2^n}$, where $2^n$ is the exponent of $V^m(L,\lambda)$.  Since $\sigma$ is trivial on $W(L,\lambda)$, it acts on $V^m(L,\lambda)$ as it acts on $\Mu_{2^n}$.
\end{proof}

The field of points $L = \Q(V)$ of an object $V$ of $\cA$ is unramified outside $\Sigma = \{2, p, \infty\}$.   As in \S\ref{Arith}, we use the conventions of \cite[IV]{Ser} for the upper and lower numbering of higher ramification subgroups in the inertia group $\cI_\lambda(L/\Q)$ at the fixed place $\lambda$ over 2.  In Serre's notation, if the exponent of $V$ is $2^n$, then Fontaine \cite{Fon} asserts that 
\begin{equation} \label{FontBd}
\cI_\lambda(L/\Q)^u  = 1 \text{ for all } u > n.
\end{equation}

\vspace{5 pt}

Recall that $p \equiv 1 \mymod{8}$ and that $M$ denotes the maximal 2-primary abelian extension of $K  = \Q(\sqrt{-p})$ of conductor $2\cO_K$.   See Corollary \ref{RayClass1b} for the arithmetic of $M$ and additional notation used below.

\begin{prop}   \label{FGField}
If $V$ is an object of $\cA$ and $2 V = 0$, then $L = \Q(V)$ is contained in $M$. 
\end{prop}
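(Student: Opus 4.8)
The plan is to exploit the maximality built into the definition of $M$: writing $L=\Q(V)$, I will show that $LK/K$ is a $2$-extension that is unramified outside the unique prime $\lambda_K\mid 2$ of $K$ and whose conductor at $\lambda_K$ divides $2\cO_K$, and then deduce $LK\subseteq M$ (hence $L\subseteq M$) from Proposition \ref{RayClass1a}. First I would record the ramification of $L/\Q$. By $\cA$2, $\Gal(L/\Q)$ is a $2$-group, and, as recalled above, $L/\Q$ is unramified outside $\Sigma=\{2,p,\infty\}$. Since $2V=0$, in $\End_{\F_2}(V)$ one has $(\sigma_v-1)^2=\sigma_v^2+1$, so $\cA$1 forces $\sigma_v^2=1$ in $\Gal(L/\Q)\hookrightarrow\GL(V)$; thus the inertia $\cI_v(L/\Q)=\lr{\sigma_v}$ has order at most $2$ at each $v\mid p$. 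At $\Lambda\mid 2$, Fontaine's bound \eqref{FontBd} with $n=1$ gives $\cI_\Lambda(L/\Q)^u=\{1\}$ for all $u>1$.

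Turning to $LK/K$: it is a $2$-extension, since $\Gal(LK/K)\hookrightarrow\Gal(L/\Q)$, and it is unramified at infinity, since $K$ is imaginary quadratic. At $p$, since $LK/\Q$ is a $2$-extension and $p$ is odd, it is tamely ramified at $p$, so for $w\mid p$ the group $\cI_w(LK/\Q)$ is cyclic; under $\Gal(LK/\Q)\hookrightarrow\Gal(L/\Q)\times\Gal(K/\Q)$ it lands in a group of exponent $2$ (the product of the inertia groups above $p$, of orders $\le 2$ and $2$), so it has order $\le 2$, and since it surjects onto $\cI_v(K/\Q)\simeq\Z/2\Z$ it maps isomorphically onto that group, whence $\cI_w(LK/K)=\{1\}$. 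So $LK/K$ is unramified outside $\lambda_K$. For the conductor at $\lambda_K$: since $p\equiv 1\mymod{8}$, $p$ is a square in $\Q_2$ and $K_{\lambda_K}\cong\Q_2(i)$, which has a single ramification break, at $1$; hence $\cI_{\lambda_K}(K/\Q)^u=\{1\}$ for $u>1$ and $\varphi_{K/\Q}(u)>1$ whenever $u>1$. Combining $\cI_\Lambda(L/\Q)^u=\{1\}$ and $\cI_{\lambda_K}(K/\Q)^u=\{1\}$ (both for $u>1$) via $\Gal(LK/\Q)\hookrightarrow\Gal(L/\Q)\times\Gal(K/\Q)$ and the compatibility of the upper numbering with the two quotient maps, one gets $\cI_\Lambda(LK/\Q)^u=\{1\}$ for $u>1$; then the Herbrand composition formula $\varphi_{LK/\Q}=\varphi_{K/\Q}\circ\varphi_{LK/K}$, together with $\cI_\Lambda(LK/K)_t=\cI_\Lambda(LK/\Q)_t\cap\Gal(LK/K)$, gives $\cI_\Lambda(LK/K)^u=\cI_\Lambda(LK/\Q)^{\varphi_{K/\Q}(u)}\cap\Gal(LK/K)=\{1\}$ for $u>1$. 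Thus the conductor of $LK/K$ at $\lambda_K$ divides $2\cO_K$. This transfer of Fontaine's bound through the tower $\Q\subseteq K\subseteq LK$ is the step I expect to require the most care.

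Finally, let $\cM$ be the maximal pro-$2$ extension of $K$ that is unramified outside $\lambda_K$ and satisfies $\cI_\Lambda(\cM/K)^{1+\epsilon}=\{1\}$ for all $\epsilon>0$ at $\Lambda\mid 2$; equivalently, $\cM$ is the compositum of the finite extensions of $K$ with these properties (a class closed under composita, by the same upper-numbering compatibility). By the preceding paragraph, $LK\subseteq\cM$. For every abelian-over-$K$ subextension $E$ of $\cM$, the ramification condition is equivalent, by Hasse--Arf, to having conductor dividing $2\cO_K$, so $E\subseteq M$ by the defining maximality of $M$; since also $M\subseteq\cM$, the field $M$ is precisely the maximal subextension of $\cM$ abelian over $K$, i.e. $\Gal(\cM/K)^{\mathrm{ab}}\simeq\Gal(M/K)\simeq\Z/2h_2\Z$. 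A pro-$2$ group whose abelianization is procyclic is itself procyclic, hence abelian; therefore $\cM=M$, and so $L\subseteq LK\subseteq M$, as claimed.
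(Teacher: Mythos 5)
Your proof is correct and follows essentially the same route as the paper's: Grothendieck's criterion forces $\sigma_v^2=1$ at $p$, Fontaine's bound at $2$ is transferred to the conductor bound $2\cO_K$ for $LK/K$ (the paper delegates this transfer to \cite[Lemma 6]{BK1}, where you carry out the Herbrand computation explicitly), and a Burnside/Frattini argument against the cyclic group $\Gal(M/K)$ of Proposition \ref{RayClass1a} finishes the job. The only difference is presentational: you run the Burnside step on the universal pro-$2$ extension $\cM$ with these ramification bounds, whereas the paper applies it directly to $\Gal(L/K)$ via its maximal abelian subextension.
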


\begin{proof}
It suffices to replace $L = \Q(V)$ by $L = K(V)$.   Property $\cA2$ of the category $\cA$ implies that $\Gal(L/\Q)$ is a 2-group.  Let $\gp$ be the unique prime over $p$ in $K$ and let $\gP$ be a prime of $L$ over $\gp$.  Since $2V = 0$, the inertia group $\cI_\gP(L/\Q)$ has order 2 by $\cA1$.  But $p$ ramifies in $K$, so $L/K$ is unramified over $\gp$ and $L
/K$ might ramify only at primes $\lambda$ of $L$ over 2.  For subfields of $L$, we use a tilde to denote the completion at the prime below $\lambda$.

Next, we evaluate the ray class conductor $\bfc$ of $L'/K$.  The bound \eqref{FontBd} on higher ramification for $\tilde{L}'/\Q_2$ holds with $n = 1$.  Then, the conductor computation in \cite[Lemma 6]{BK1}, applied to the sequence of inclusions
$$
\Q_2 \hookrightarrow \tilde{K} = \Q_2(i) \hookrightarrow \tilde{L}'\hookrightarrow \tilde{L} = \tilde{K}(V),
$$
shows that $\bfc$ divides $(1+i)^2 = 2\cO_K$ and thus $L'$ is contained in $M$.  But $\Gal(M/K)$ is cyclic by Proposition \ref{RayClass1a}, so $\Gal(L'/K)$ is cyclic.  Since $\Gal(L/K)$ is a 2-group with cyclic abelianization, $\Gal(L/K)$ also is cyclic by the Burnside basis theorem.  Hence $L = L'$ is contained in $M$.
\end{proof}

Recall that if $V_1$ and $V_2$ are in $\cA$, then $\Ext^1_{\cA}(V_1,V_2) = \Ext^1_{\un{C}}(V_1,V_2)$.  

\begin{Not} \label{NotPhi}
Since $p \equiv 1 \mymod{8}$, \cite[Cor.~4.2]{Sch2} implies that 
$$
\dim_{\F_2} \Ext^1_{\cA}(\Mu_2,\cZ_2) = 1.
$$  
Let $\Phi$ represent the non-trivial class in $\Ext^1_{\cA}(\Mu_2,\cZ_2)$.  
\end{Not} 

See \cite[Prop. 2.4]{Sch4} for basic properties of $\Phi$.  In particular, $2\Phi = 0$ and $\Phi$ is Cartier self-dual.  The points of $\Phi$ generate the field $\Q(\Phi) = \Q(\sqrt{p})$. The generator $\sigma_v$ for the inertia group $\cI_v(M/\Q)$ in Corollary \ref{RayClass1b} at $v$ over $p$ generates $\Gal(\Q(\sqrt{p})/\Q)$ and is represented by an upper triangular unipotent matrix with respect to the flag $0 \subset \cZ_2 \subset \Phi$.  Over $\Z_2$, there is a splitting $\Phi \otimes_R \Z_2 \simeq \Mu_2 \oplus \cZ_2$. 

\begin{lem} \label{bigExt}
If $W$ has a filtration $\cF$ with $\gr(\cF)$ given by $d$ copies of $\Phi$, then $\Ext^1_{\cA}(\Mu_2,W) = 0$.  If $W'$ has a filtration $\cF'$ with successive quotients isomorphic to $\cZ_2$, then $\Ext^1_{\cA}(\Phi,W') = 0$.
\end{lem}

\begin{proof} The case  $d=1$, namely $\Ext^1_{\cA}(\Mu_2,\Phi) = 0$,
 is in \cite[Prop.\!~2.5]{Sch4}.  If $d \ge 2$, the filtration $\cF$ leads to a short exact sequence
$$
0 \to \Phi \to W \to W' \to 0
$$ 
in which $W'$ has a filtration with $d-1$ successive quotients isomorphic to $\Phi$. Applying $\Hom(\Mu_2,-)$ gives exactness of
$$
\to \Ext^1_{\cA}(\Mu_2,\Phi) \to \Ext^1_{\cA}(\Mu_2,W) \to \Ext^1_{\cA}(\Mu_2,W') \to
$$  
The first group is trivial because $d=1$ and the last vanishes by induction, so $\Ext^1_{\cA}(\Mu_2,W) = 0$.  An analogous argument shows that $\Ext^1_{\cA}(\Phi,W') = 0$. 
\end{proof}

The following Lemmas establish criteria for a group scheme in $\cA$ to be filtered by copies of $\Phi$, culminating in Proposition \ref{Phi1}.  We thank Brian Conrad for his insight on the last part of the next proof.

\begin{lem} \label{NoMu}
Let $V \ne 0$ be in $\cA$, with no subgroup scheme isomorphic to $\Mu_2$.  Then $V$ is filtered entirely by copies of $\cZ_2$ or $V$ has a subgroup scheme isomorphic to $\Phi$.  If $W$ is a closed subgroup scheme of $V$ filtered entirely by copies of $\Phi$, then $V/W$ has no closed subgroup scheme isomorphic to $\Mu_2$.
\end{lem}

\begin{proof}
Assume that $V$ is not filtered entirely by copies of $\cZ_2$.  Then $V$ has a subquotient isomorphic to $\Mu_2$ by Proposition \ref{ZMuFil}.  Among all filtrations of $V$ whose successive quotients have order 2, choose $\cF$ such that the first copy of $\Mu_2$ occurs as early as possible: 
$$
\gr(\cF) = [\, \overbrace{\cZ_2, \dots, \cZ_2}^n \, , \Mu_2, \dots], 
$$
with $n$ minimal.  By hypothesis, $n \ge 1$.  Then the first subquotient graded by $[\cZ_2,\Mu_2]$ above does not split and therefore is isomorphic to $\Phi$:    
$$
\gr(\cF) = [ \, \overbrace{\cZ_2, \dots, \cZ_2}^{n-1} \, , \Phi, \dots],
$$
Let $W'$ be the closed subgroup scheme of $V$ filtered by the $n-1$ copies of $\cZ_2$ indicated above.  By Lemma \ref{bigExt}, $\Ext^1_{\cA}(\Phi,W')$ vanishes.  The resulting splitting produces a subgroup scheme of $V$ isomorphic to $\Phi$.

Next, suppose that there is a closed immersion $i\!: \, \Mu_2 \to V/W$.  Under the corresponding natural map $\Ext^1(V/W,W) \to \Ext^1(\Mu_2,W)$, the image of the class $[V]$ is represented by the pullback \cite[II.6]{HS}:
\begin{equation*} 
\begin{tikzcd}%[column sep = small, row sep = small]
P \ar["\beta" ' , d,dashed]  \ar["\alpha",r,dashed]  & \Mu_2 \ar["i" , d] \\
V \ar["j",r]  & V/W
\end{tikzcd}
\end{equation*}
Forming the pullback $P$ preserves monomorphisms, so $\beta$ also is a closed immersion.  The exact sequence $0 \to W \to P \to \Mu_2 \to 0$ representing the extension class $[P]$ splits by Lemma \ref{bigExt}, giving a closed subgroup scheme $U \simeq \Mu_2$ in $P$.  Since $\beta$ is a closed immersion, $\beta(U)$ is a copy of $\Mu_2$ in $V$, contradicting the hypothesis on $V$.
\end{proof}

\begin{lem} \label{Phi0}
Every group scheme $V$ in $\cA$ has a filtration with grading $[\cM,\cG,\cZ]$, where $\cM$ is filtered by copies of $\Mu_2$, $\cG$ by copies of $\Phi$ and $\cZ$ by copies of $\cZ_2$.
\end{lem}  

\begin{proof}  
Among closed subgroup schemes of $V$, choose $\cM$ of maximal dimension $\ge 0$ with $\cM$ is filtered entirely by copies of $\Mu_2$.  Then $W = V/\cM$ has no subgroup scheme isomorphic to $\Mu_2$ by maximality of $\cM$.  If $W = 0$ or $W$ is filtered entirely by copies of $\cZ_2$, we are done.  By Lemma \ref{NoMu}, we therefore assume that there is a closed subgroup scheme of $W$ isomorphic to $\Phi$.

Among closed subgroup schemes of $W$, choose $\cG$ of maximal dimension $\ge 2$ such that $\cG$ is filtered entirely by copies of $\Phi$.   Then $X = W/\cG$ has no closed subgroup scheme isomorphic to $\Phi$ by maximality of $\cG$.  Also, $X$ has no closed subgroup scheme isomorphic to $\Mu_2$ by Lemma \ref{NoMu}.   Then Lemma \ref{NoMu} tells us that $X = 0$ or $X$ is filtered entirely by copies of $\cZ_2$.  Thus, there is a filtration
$$
0 \subseteq \cM \subset V_2 \subseteq V
$$
with $\cM$ filtered entirely by copies of $\Mu_2$, $\cV_2/\cM \simeq \cG$ filtered entirely by copies of $\Phi$ and $V/V_2 \simeq X$, filtered entirely by copies of $\cZ_2$.  
\end{proof}

\begin{prop} \label{Phi1}
Let $V$ be a group scheme in $\cA$.  The following are equivalent:
\begin{enumerate}[\rm i)]
\item $V$ has a filtration $0 \subset V_1 \subset \dots \subset V_n = V$ with successive quotients isomorphic to $\Phi$.  \vspace{2 pt}
\item $V$ has no closed subgroup scheme isomorphic to $\Mu_2$ over $\Z[\frac{1}{p}]$ and $|V^m| = |V^{et}|$ as group schemes over $\Z_2$.
\item $V$ has no quotient isomorphic to $\cZ_2$ over $\Z[\frac{1}{p}]$ and $|V^m| = |V^{et}|$ as group schemes over $\Z_2$.
\end{enumerate}
If $W$ is a closed subgroup scheme of $V$ and both $V$ and $W$ satisfy these conditions, then so does $V/W$.
\end{prop}

\begin{proof}
\noindent We use induction on $n$ to show that (i) implies (ii).  The case $n=1$ follows from the definition of $\Phi$.  Assume $n \ge 2$, but that $V$ has a subgroup scheme $\cU$ isomorphic to $\Mu_2$ over $\Z[\frac{1}{p}]$.  The filtration in (i) provides an exact sequence
$$
0 \to V_{n-1} \to V \xrightarrow{\phi} \Phi \to 0.                                                                   
$$
in which the closed subgroup scheme $V_{n-1}$ of $V$ has a filtration by copies of $\Phi$.  Then, by the induction hypothesis, (ii) holds for $V_{n-1}$ and so $\cU$ is not contained in $V_{n-1}$.  Thus, $\phi(\cU) \simeq \Mu_2$ by connectedness.  But this contradicts the fact that $\Phi$ is a non-split extension of $\Mu_2$ by $\cZ_2$.  Hence $V$ has no subgroup scheme isomorphic to $\Mu_2$.  Also, condition (i) and the grading $[\cZ_2, \Mu_2]$ of a filtration for $\Phi$ imply that $|V^m| = |V^{et}| = n$, so (ii) holds for $V$.  

For the converse, assume (ii).  Then $\cM = 0$ in Lemma \ref{Phi0}, so there is a filtration of $V$ with grading $[\cG,\cZ]$.  Since $\vv{\cG^m} = \vv{\cG^{et}}$ on restriction of the base to $\Z_2$ and $|V^{et}| = |V^{m}|$ by assumption, we find that $\cZ$ is trivial.  Hence $V$ has a filtration with successive quotients isomorphic to $\Phi$.  Equivalence of item (iii) follows by duality.

For the last claim in the Proposition, assume $V$ has no subgroup scheme isomorphic to $\Mu_2$.  The same holds for $V/W$ by Lemma \ref{NoMu}.  In addition,
$$
|(V/W)^{et}| = |V^{et}|/|W^{et}| = |V^m|/|W^m| = |(V/W)^m|, 
$$
so $V/W$ satisfies (ii).
\end{proof}

\begin{Def} \label{balanced}
A group scheme $V$ in $\cA$ is {\em balanced} if it enjoys the equivalent properties in Proposition {\rm \ref{Phi1}}. 
\end{Def}

The rest of this section treats finite flat group schemes over $\Z_2$.   We first describe one such  group scheme $\Gamma$, used in a patching argument in the next section. According to \cite{LMF}, the Jacobian $B = J(\cC)$ of the curve   
$$
\cC\!:  \, y^2 + (x^3 + x^2 + x + 2) \,  y = (x^2 + 1)^2
$$
is isogenous to the modular Jacobian $J_0(29)$ with real multiplication by $\Z[\sqrt{2}]$.  It is principally polarized and has good reduction over $\Z_2$. Thus, $\Gamma = B[2]$ is a self-dual group scheme over $\Z_2$.

 Let $\cE$ be the non-trivial \'{e}tale extension of $\cZ_2$ by $\cZ_2$ over $\Z_2$ with $2 \cE = 0$  and denote its Cartier dual by $\cM = \cE^D$.   Thus $\cE$ and $\cM$ split over the unramified quadratic ring $\Z_2[\Mu_3]$.  Set $F = \Q_2(\Mu_{12})$ and 
\begin{equation} \label{LocalGal}
 \rG = \Gal(F/\Q_2) = \lr{\sigma, \phi},
\end{equation}
 where $\sigma(\zeta_{12}) = \zeta_{12}^7$ and $\phi(\zeta_{12}) = \zeta_{12}^5$.  Note that $\sigma$ generates the inertia subgroup in $\rG$ and $\phi$ is a choice of Frobenius fixing $i$.

\begin{lem}  \label{J29}
The group scheme $\Gamma$ is an extension of $\cE$ by $\cM$ over $\Z_2$ with field of points $\Q_2(\Gamma) = F$ and a unique $\Q_2$-rational point of order $2$.
\end{lem}

\begin{proof}
The field $\Q_2(\Gamma)$ is the splitting field of 
\begin{equation*} 
 f(x) = (x^3 + x^2 + x + 2)^2 + 4(x^2 + 1)^2  =  g(x) \, \ov{g}(x) 
\end{equation*}
where $g(x)$ and $\ov{g}(x)$ are conjugate cubics over $\Z_2[i]$. The congruence
$$
g(x) \equiv x(x^2+x+1) \pmod{2\Z[i]}
$$
implies that $\Q_2(\Gamma) = F$ by Hensel's Lemma.  Also, $f$ factors as an irreducible quadratic times an irreducible quartic over $\Z_2$.  Hence $\Gamma$ has exactly one point of order 2 over $\Q_2$.  

There is no biconnected subquotient of $\Gamma$ because $\Gal(F/\Q_2)$ is a 2-group.  The multiplicative subgroup scheme $\Gamma^m$ of $\Gamma$ and the \'{e}tale quotient $\Gamma^{et}$ have order 4 because $\Gamma$ is self-dual.  In the connected-\'{e}tale exact sequence
\begin{equation} \label{GammaExt}
0 \to \Gamma^m \to \Gamma \to \Gamma^{et} \to 0,
\end{equation}
$\sigma$ acts trivially on the Galois modules $\Gamma^m(F)$ and $\Gamma^{et}(F)$ by Lemma \ref{InertiaOnVm}.  By assumption, $\vv{\Gamma(\Q_2)} = 2$, so $\Gamma^m$ is a non-trivial extension of $\Mu_2$ by $\Mu_2$, isomorphic to $\cM$.  By duality, $\Gamma^{et}$ is isomorphic to $\cE$. 
\end{proof}

\begin{prop}  \label{GammaOverZ2}
The group scheme $\Gamma$ represents the unique class $[V]$ in $\Ext^1_{\Z_2}(\cE,\cM)$ such that $2V = 0$, $\Q_2(V) \subseteq F$ and $\vv{V(\Q_2)} = 2$.  %Furthermore, the map $\Hom_{\Z_2}(\Gamma,\Gamma) \to \Hom_{\Q_2}(\Gamma,\Gamma)$ is surjective. 

\end{prop}

\begin{proof}
We first show that the conditions on $V$ determine the associated Galois module $V(F)$.  The exact sequence 
$$
0 \to \cM  \xrightarrow{i} V \xrightarrow{j} \cE \to 0
$$ satisfied by $V$ leads to a filtration 
$
0 \subset V_1 \subset V_2 \subset V_3 \subset V
$ 
with grading $[\Mu_2,\Mu_2,\cZ_2,\cZ_2]$.  Choose $x_4$ in $V(F)$ but not in $V_3(F)$ and define additional elements of $V(F)$ by the following action of $\Gal(F/\Q) = \lr{\sigma,\phi}$:
\begin{equation} \label{GalOnGamma}
\begin{tikzcd}%[column sep = small, row sep = small]
x_4 \ar["\sigma-1" ' , d]  \ar["\phi-1",r]  &x_3 \ar["\sigma-1" , d] \\
x_2 \ar["\phi-1",r]  & x_1
\end{tikzcd}
\end{equation}
Since $\phi$ preserves $\cM(F)$ and $\cE(F)$, but $\phi$ is not trivial on $\cE(F)$, the cosets 
$$
\ov{x}_4 = j(x_4) = x_4 +\cM(F) \quad \text{ and } \quad \ov{x}_3 = j(x_3) = x_3 +\cM(F)
$$
span $\cE(F) = V(F)/\cM(F)$.  Also, $x_1$ and $x_2$ are in $\cM(F) = \ker j$ by Lemma \ref{InertiaOnVm}.  Since $\phi$ is an involution, $x_3$ is fixed by $\phi$.  Then $x_1 \ne 0$, or else $x_3$ is fixed by $\sigma$, so $\vv{V(\Q_2)}$ contains $\lr{V_1,x_3}$, of order 4, a contradiction.

In fact, $x_1, \dots, x_4$ is a basis for $V(F)$.  Otherwise, there is an $\F_2$-relation
\begin{equation} \label{DepRel}
\alpha_1 x_1 + \alpha_2 x_2 + \alpha_3 x_3 + \alpha_4 x_4 = 0.
\end{equation}
Apply $\phi-1$ and $\sigma-1$, to obtain:
\begin{equation} \label{AuxEqs}
{\rm i)} \hspace{5 pt} \alpha_2 x_1 + \alpha_4 x_3 = 0 \hspace{20 pt} \text{ and } \hspace{20 pt} {\rm ii)} \hspace{5 pt}\alpha_3 x_1 + \alpha_4 x_2 = 0.
\end{equation}
By (\ref{AuxEqs}i), $\alpha_4 = 0$, since $x_3$ is not in $\cM(F)$.  But then $\alpha_2 = 0$ in (\ref{AuxEqs}i), $\alpha_3 = 0$ in (\ref{AuxEqs}ii) and so $\alpha_1 = 0$ in \eqref{DepRel}.  Hence $\lr{x_1, \dots, x_4} = V(F)$ and $\lr{x_1,x_2} = \cM(F)$.

Let $V'(F)$ be another extension of Galois modules
$$
0 \to \cM(F) \xrightarrow{i'} V'(F)  \xrightarrow{j'} \cE(F) \to 0
$$
and choose $y_4$ in $V'(F)$ satisfying $j'(y_4) = \ov{x}_4$.  Let $f\!: \, V(F) \to V'(F)$ be the isomorphism determined by $f(x_4) = y_4$ and the action of Galois in \eqref{GalOnGamma} and let $y_t = f(x_t)$ for $t = 1, 2, 3$.  In particular, 
$$
j'(y_3) = j'((\phi-1)(y_4)) = (\phi-1)(j'(y_4)) = (\phi-1)(\ov{x}_4) = \ov{x}_3.
$$
Also, $i'(x_2)$ generates $\ker j'$ and so $i'(x_2) = y_2 + \beta y_1$ for some $\beta$ in $\F_2$.  Replace the choice of $y_4$ by $y_4' = y_4 + \beta y_3$ to arrange for 
$$
y_2' = (\sigma-1)(y_4') = y_2+\beta y_1 = i'(x_2)
$$
and redefine $f$ by $f(x_4) = y_4'$.  Then $f$ induces an isomorphism of extensions:
$$
\begin{CD}
0 @ >>> \cM(F) @> i >>     V(F) @>j>> \cE(F)  @>>> 0\\
  &&   @V \text{Identity} VV @VV f V            @VV  \text{Identity} V       \\
0 @ >>> \cM(F) @> i' >> V'(F) @> j' >> \cE(F)  @>>> 0\\
\end{CD}
$$

By Lemma \ref{J29}, $\Gamma$ satisfies the hypotheses on $V(F)$ and so $\Gamma(F)$ also represents the same extension class of Galois modules.  Then Lemma \ref{GalModGpSch} below implies that  $[V] = [\Gamma]$ in $\Ext^1_{\Z_2}(\cE,\cM)$.  
\end{proof}

\begin{lem}  \label{GalModGpSch}
Let $\cE$ and $\cM$ be arbitrary \'etale and multiplicative group schemes over $\Z_2$, respectively.  If the classes $[W_1]$ and $[W_2]$ in $\Ext^1_{\Z_2}(\cE,\cM)$ become equal in $\Ext^1_{\Q_2}(\cE,\cM)$, then $[W_1] = [W_2]$ in $\Ext^1_{\Z_2}(\cE,\cM)$.
\end{lem}

\begin{proof}
Let $[X] = [W_1]-[W_2]$, let $F' = \Q_2(X)$ and consider an exact sequence
$$
0 \to \cM \xrightarrow{f_1} X \xrightarrow{f_2} \cE \to 0
$$
representing $[X]$.  Since $[X]$ becomes trivial in $\Ext^1_{\Q_2}(\cE,\cM)$, there is a Galois submodule $Y$ of $X(F')$ such that $f_2$ induces an isomorphism $Y \xrightarrow{\sim} \cE(F')$.  Let $\cY$ be the closed subgroup scheme of $X$ whose associated Galois module is $Y$.  Observe that $\cY$ is \'{e}tale.  Otherwise, it has a non-trivial multiplicative component that would be preserved by the morphism $f_2$, to give a non-trivial multiplicative component of $\cE$.  Hence $\cY$ is \'etale.  But then $\cY$ and $\cE$ can be identified with their associated Galois modules, which are isomorphic by construction.  Therefore $[X] = 0$ in $\Ext^1_{\Z_2}(\cE,\cM)$ and so $[W_1] = [W_2]$. 
\end{proof}

The last result presented here will be used  in the next section, to ascertain uniqueness of the group scheme $\Xi_p$.

\begin{lem} \label{EndoGamma} 
The natural map $\End_{\Z_2} \Gamma \to \End_{\Q_2} \Gamma$ is an isomorphism.
\end{lem}

\begin{proof}
By Kummer theory, there is a unique extension of $\cZ_2$ by $\Mu_2$ of exponent 2 with field of points $\Q_2(i)$, namely the Katz-Mazur group scheme $G_{-1}$ over $\Z_2$.  Let $x_1, \dots, x_4$ be the $\F_2$-basis for $\Gamma(F)$ in \eqref{GalOnGamma}.  It follows that the closed subgroup scheme $W$ of $\Gamma$ corresponding to the Galois module $\lr{x_1,x_3}$ is isomorphic to $G_{-1}$.  The Galois module associated to the quotient $\Gamma/W$ is given by $\lr{x_2+W,x_4+W}$ and so $\Gamma/W$ also is isomorphic to $G_{-1}$.  Thus $\Gamma$ is an extension of $G_{-1}$ by $G_{-1}$ which splits upon base change to the unramified ring $S = \Z_2[\Mu_3]$.   

Let $\tilde{\Gamma} = \Gamma \otimes_{\Z_2} S$ and $\tilde{G}_{-1}= G_{-1} \otimes_{\Z_2}  S$.  Then $\tilde{\Gamma} \simeq \tilde{G}_{-1} \oplus \tilde{G}_{-1}$ and 
$\End_S \tilde{\Gamma}$ is the ring of $2 \times 2$ matrices $\rM_2(\End_S \tilde{G}_{-1})$.
Taking the action of $\sigma$ into account, we find that 
$
\End_{\Z_2} G_{-1}  = \End_{\Q_2} G_{-1} = \F_2^2.
$
Indeed, we have the identity morphism and the morphism
$$
G_{-1} \to G_{-1}/\Mu_2 \simeq \cZ_2 \xrightarrow{h} \Mu_2 \to  G_{-1},
$$
where $h$ generates $\Hom_{\Z_2}(\cZ_2,\Mu_2)$.   The induced maps generate all 4 endomorphisms of the Galois module $G_{-1}(\Q_2)$.  By the same argument over $S$, we have $\End_S(\tilde{G}_{-1}) = \End_{\Q_2(\Mu_3)}(\tilde{G}_{-1}) = \F_2^2$. 

Given $f$ in $\End_{\Q_2} \Gamma $, we can therefore find some $g$ in $\End_S \tilde{\Gamma}$ such that $f$ and $g$ agree upon base change to $\Q_2(\Mu_3)$. Since $f$ is invariant under the action of $\Gal(\Q_2(\Mu_3)/\Q_2)$, so is $g$.  Hence $g$ is in $\End_{\Z_2}(\Gamma)$, as required.  
\end{proof}

\section{The group scheme $\Xi_p$ and $(d,\Phi$)-blocs} \label{dPhiSection}

To construct these group schemes, we first review a particular representation of the dihedral group $\rD_{2^{e+1}}$ of order $2^{e+2}$.   Let $g_1,g_2$ be non-commuting involutions that generate $\rD_{2^{e+1}}$ and let $\cR = \F_2[\rD_{2^{e+1}}]$ be its group ring.  The augmentation ideal $\cI = (g_1-1, g_2-1)$ is the unique maximal ideal of $\cR$ and is nilpotent.  Define $\alpha_j$ and $\beta_j$ in $\cI^j$ by
$$
\begin{array}{l l l}
\alpha_j &=& \cdots (g_1-1)(g_2-1)(g_1-1), \\[2 pt] 
\beta_j &=& \cdots (g_2-1)(g_1-1)(g_2-1), \\ 
\end{array}
$$
each having $j \ge 1$ factors.  From the relations $(g_1-1)^2 = 0$, $(g_2-1)^2 = 0$ and the basic identity  
$$
(g g'-1) = (g-1)(g'-1) + (g-1) + (g'-1) \, \text{ for all } g, g' \text{ in }\rD_{2^{e+1}},
$$
we find that 
\begin{equation} \label{SpanR}
\cR = \lr{1, \alpha_1, \beta_1, \alpha_2, \beta_2, \dots} \ \text{over } \F_2. 
\end{equation}

\begin{prop} \label{GalRep}
For $e \ge 1$, let $\rD_{2^{e+1}} = \lr{g_1,g_2}$ be the dihedral group of order $2^{e+2}$ generated by involutions $g_1,g_2$.  Let $\cR = \F_2[\rD_{2^{e+1}}]$ and let $\rX = \cR x$ be a cyclic $\cR$-module with $g_2(x) = x \ne 0$.  Let $k$ be the minimal integer such that $\alpha_k \, x = 0$ and assume that $k \ge 3$.  Then $\dim_{\F_2} \rX = k$ with $k \le 2^{e+1}$ and $\rX$ is indecomposable.  The representation 
$
\rho_{\rX}:  \, \rD_{2^{e+1}} \to \SL_k (\F_2)
$
afforded by $X$ is faithful if and only if $2^e <  k \le 2^{e+1}$, or equivalently, $e+1 = \lceil \log_2 k \rceil$.   
\end{prop}

\begin{proof}
There is a minimal $k$ such that $\alpha_k \, x = 0$ because $\cI$ is nilpotent.  Then we have the chain
\begin{equation} \label{basis}
x = v_k \xrightarrow{g_1-1} v_{k-1} \xrightarrow{g_2-1} v_{k-2}  \xrightarrow{\quad} \dots  \xrightarrow{\quad} v_2 \xrightarrow{\quad} v_1  \xrightarrow{g_\delta -1} 0, 
\end{equation}
where $\delta = 1$ or 2, in agreement with the parity of $k$.  Thus, $g_1$ and $g_2$ act by:
\begin{equation} \label{basis2}
\begin{array} {r l c l l}
 & (g_1-1)(v_j)  = v_{j-1} &{\rm and}& (g_2-1)(v_j) = 0 &{\rm if \,}  j \equiv k \bmod{2}, \\[3 pt]
 & (g_1-1)(v_j)  = 0 &{\rm and}& (g_2-1)(v_j) = v_{j-1} &{\rm if \,}  j \not\equiv k \bmod{2}
\end{array}
\end{equation}
By \eqref{SpanR}, $v_1, \dots, v_k$ span $\rX$.  Also $v_1 = \alpha_{k-1}x \ne 0$ by assumption.  

To verify linear independence of $v_1, \dots, v_k$ assume instead that there is a non-trivial dependence relation over $\F_2$.  Then there is one of the form 
\begin{equation} \label{ind}
c_1 v_1 + c_2 v_2 +  \dots + c_{m-1} v_{m-1}+ v_m = 0
\end{equation}
with $m$ minimal and necessarily, $m \ge 2$.  Depending on the parity of $m$ and $k$, there is some $g_\epsilon$, with $\epsilon = 1$ or 2, such that $(g_\epsilon-1)(v_m) = v_{m-1}$.  Applying $g_{\epsilon}-1$ to \eqref{ind} gives a shorter non-trivial relation, so a contradiction.  Hence $v_1, \dots, v_k$ is a basis for $\rX$ and $\dim \rX = k$.  

Let $v_j = 0$ if $j \le 0$ and let $t = \rho_{\rX}(g_2g_1)$.  If $1 \le j \le k$, we have
\begin{equation} \label{t-1}
(t-1)(v_j) = \begin{cases} v_{j-1}&\text{if } j \not\equiv k \bmod{2}, \\ v_{j-1} + v_{j-2} &\text{if }  j \equiv k \bmod{2}, \end{cases}
\end{equation}
so $(t-1)^{k-1}(v_{k}) = v_1 \ne 0$.  But $(t-1)^{k}(\rX) = 0$ because $\dim \rX = k$ and $t$ is unipotent.  Hence the index of nilpotency of $t-1$ is exactly $k$.  Thus $t$ can be represented by one $k \times k$ Jordan block and so $\rX$ is indecomposable.

It also follows that $t$ has order $2^a$ where $a = \lceil \log_2 k \rceil$.  Since the maximal order among elements of $\rD_{2^{e+1}}$ is $2^{e+1}$, we find that $k \le 2^a \le 2^{e+1}$. Furthermore, the representation $\rho_{\rX}$ is faithful precisely when $t$ has order $2^{e+1}$, or equivalently, $2^e < k \le 2^{e+1}$.  
\end{proof}

\begin{Rem} \label{RingConverse}
The following cases were excluded above.  
\begin{enumerate}[\, $\cdot$]
\item $k=1 \Rightarrow \lr{g_1,g_2}$ acts trivially on $X =\lr{v_1}$.  \vspace{2 pt}
\item $k = 2 \Rightarrow \rho_X{\lr{g_1,g_2}} \simeq \Z/2\Z$ on $X = \lr{v_1,v_2}$. 
\end{enumerate} 
Note a converse to the Proposition.  Let $V$ be an $\F_2$-vector space of dimension $k  \ge 3$.  If $\lr{g_1,g_2}$ acts faithfully on a basis for $V$ as in \eqref{basis2}, then $g_1$ and $g_2$ are non-commuting involutions and $G \simeq \cD_{2^{e+1}}$, with $e+1 = \lceil \log_2 k \rceil$.
\end{Rem}

\begin{Not} \label{Ringel}
Let $\rX_k$ denote the module of $\F_2$-dimension $k$ in Proposition \ref{GalRep}.  We call $v_1, \dots, v_k$, a {\em Ringel basis}\footnote{Ringel \cite{Rin} determined all the indecomposable representations of $\rD_{2^{e+1}}$, among which is the cyclic module $\rX_k$.  For the reader's convenience, we included a self-contained proof for our special case.} for $\rX_k$ when the action of $g_1$ and $g_2$ is given by \eqref{basis2}.  By convention, $v_j = 0$ if $j$ is not in $\{1, \dots, k\}$.
\end{Not}

\begin{lem}  \label{tAction}
Set $t = \rho_{\rX_k}(g_2 g_1)$ and suppose that $1 \le 2^a \le k$. The action of $t^{2^a}$ on $v_j$ is determined by {\rm \eqref{t-1}} and the relation
\begin{equation} \label{LinAlg}
t^{2^a}(v_j) = v_j + t^{2^{a-1}}(v_{j-2^a}) \quad \text{for } a \ge 1.
\end{equation}
In particular, if $2^{a+1} \le k$, then 
\begin{equation} \label{tPower}
t^{2^a}(v_{2^{a+1}}) = v_1+v_2+v_4+v_8+ \dots + v_{2^{a+1}}.
\end{equation}
\end{lem}

\begin{proof}
If $u = g_1 g_2 + g_2 g_1 =  t+t^{-1}$ and $j \le k$, then $u(v_j) =  v_{j-2}$ and by induction, $u^{2^{a-1}}(v_j) = v_{j-2^a}$, so 
$$
\begin{array}{r c l}
(t-1)^{2^a}(v_j) &=& (t-1)^{2 \cdot 2^{a-1}}(v_j)  = (t^2-1)^{2^{a-1}}(v_j) = (tu)^{2^{a-1}}(v_j)  \\[4 pt] 
&=& t^{2^{a-1}} u^{2^{a-1}}(v_j) 
= t^{2^{a-1}}(v_{j-2^a}). 
\end{array}
$$ 
Thus, \eqref{LinAlg} holds.  Since $t(v_2) = v_2 + v_1$, \eqref{tPower} can then be verified by induction.
\end{proof}

\vspace{5 pt}

See Proposition \ref{RayClass1a} for notation used throughout the rest of this section.  In particular, $M$ is the maximal 2-primary abelian extension of $K = \Q(\sqrt{-p})$ of ray class conductor $2\cO_K$ and $M_u$ is the maximal subfield of $M$ unramified over $K$.  We fix a prime $\lambda = \lambda_M$ over 2 in $M$ and $\lambda_u$ below $\lambda$ in $M_u$.  We also fix a prime $v$ over $p$ in $M$.  Generators for $\Gal(M/\Q) = \lr{\sigma_\lambda, \sigma_v}$ are specified in Corollary \ref{RayClass1b}.  

Let  $\rX_{h_2}$ be the faithful representation of $\Gal(M_u/\Q) \simeq \rD_{h_2}$ in Notation \ref{Ringel}, taking $g_1 = \sigma_{\lambda}$ and $g_2 = \sigma_v$.  Let $v_1, \dots, v_{h_2}$ be the resulting Ringel basis for $\rX_{h_2}$.  See Lemma \ref{J29}ff for the group scheme $\Gamma$ over $\Z_2$ with field of points $F = \Q_2(\Mu_{12})$.  

\begin{theo} \label{GrpSch}
There is a group scheme  $\Xi_p$ over $R = \Z[\frac{1}{p}]$ with field of points $\Q(\Xi_p) = M_u$, characterized by the following isomorphisms: \vspace{ 2 pt}

\centerline{ {\rm i)} the Galois module $\Xi_p(M_u) \simeq \rX_{h_2}$  \quad and \quad {\rm ii)} \, $\Xi_p \otimes_R {\Z_2} \simeq \oplus_1^{h_2/4} \, \Gamma$.}
\end{theo}
 
\begin{proof}
The Galois module $\rX_{h_2}$ corresponds to an \'{e}tale group scheme $\cX$ over $R' = \Z[\frac{1}{2p}]$ with field of points $M_u$ \cite[\S3.6]{Ta2}.  Set $h_2 = 2^{n+1}$.  We will give an isomorphism of the \'etale group schemes 
$$
\cX \otimes_{R'} \Q_2 \quad \text{and} \quad (\oplus_1^{2^{n-1}} \, \Gamma) \otimes_{\Z_2} \Q_2
$$ 
over $\Q_2$, determined by the choice of a Ringel basis $\{v_1, \dots, v_{h_2}\}$ for $\rX_{h_2}$.  Then the patching argument in Lemma \ref{Patch} implies that $\cX$ prolongs to a group scheme $\Xi_p$ over $R = \Z[\frac{1}{p}]$.

 By Corollary \ref{RayClass1b}, the decomposition group $\cD_{\lambda_u}(M_u/\Q)$ is a Klein 4-group, generated by $\sigma_{\lambda_u} = \sigma_\lambda \vert M_u$ and a Frobenius $\phi_{\lambda_u} = (\sigma_{\lambda_u} \sigma_v)^{2^n}$.  Let $\cR_u = \F_2[\cD_{\lambda_u}(M_u/\Q)]$.  
 
First we show that for $j$ in $J = \{ j \text{ even} \mid 2^n+2 \le j \le 2^{n+1}\}$, the cyclic $\cR_u$-submodule of $\rX_{h_2}$ given by $N_j = \cR_u v_j$ is isomorphic to $\Gamma(F)$.  The nilpotent operators $a = \sigma_{\lambda_u} -1$ and $b = \phi_{\lambda_u}-1$ commute and $a^2 = b^2 = 0$.  By \eqref{basis2}, we have
\begin{equation} \label{LocalGalois}
v_{j-1} = a(v_j) \quad \text{and} \quad N_j = \lr{v_j,\, v_{j-1}, \, b(v_j), \, b(v_{j-1})}.
\end{equation}
This action of $\cD_{\lambda_u}(M_u/\Q)$ agrees with the action of the local Galois group on $\Gamma(F)$ in \eqref{GalOnGamma}, with $F = \Q_2(\Mu_{12})$.

Consider the increasing filtration of $\rX_{h_2}$ given by 
\begin{equation*}
\rX_{h_2}^{(k)} = \lr{v_1, v_2, \dots, v_k} \, \text{ for } \,  k = 1, \dots, h_2.
\end{equation*}
and trivial for $k \le 0$.  By \eqref{LinAlg}, we have the congruences:
\begin{equation} \label{Nj}
\begin{array}{r c l l}
b(v_j) &\equiv& v_{j-2^n} &\text{mod } \rX_{h_2}^{(j-2^n -2^{n-1})} \quad \text{ and }  \\[3 pt]
b(v_{j-1}) &\equiv& v_{j-1-2^n}   &\text{mod } \rX_{h_2}^{(j-1-2^n -2^{n-1})}
\end{array}
\end{equation}
It follows that the $\F_2$-generators of $N_j$ in \eqref{LocalGalois} are linearly independent, $\dim_{\F_2} N_j = 4$ and $N_j \simeq \Gamma(F)$ as $\cR_u$-modules.   

By \eqref{Nj} and induction, $N = \sum_{j \in J} N_j$ is a direct sum of the $N_j$'s as an $\cR_u$-submodule of $\rX_{h_2}$.  Since $\vv{J} = 2^{n-1}$, we find that 
$$
\dim_{\F_2} N = 2^{n-1} \vv{\Gamma} = 2^{n+1} = h_2  = \dim_{\F_2} \rX_{h_2} 
$$ 
and so $\rX_{h_2}$ and $\oplus_1^{2^{n-1}} \, \Gamma(F)$ are isomorphic as $\cR_u$-modules.  Equivalently, $\cX \otimes_{R'} \Q_2$ is isomorphic to $(\oplus_1^{2^{n-1}} \, \Gamma) \otimes_{\Z_2} \Q_2$.  This concludes the construction of $\Xi_p$ by patching.  

Let $\Theta = \oplus_1^{2^{n-1}} \, \Gamma$.  Since $\End_{\Z_2} \Theta = \rM_{2^{n-1}}(\End_{\Z_2} \Gamma)$,  surjectivity of the homomorphism $\Hom_{\Z_2}(\Theta,\Theta) \to \Hom_{\Q_2}(\Theta,\Theta)$ follows from Proposition \ref{GammaOverZ2}.  Furthermore, items (i) and (ii) and the identification of Galois modules over $\Q_2$ constructed above characterize $\Xi_p$, by Lemma \ref{Patch}.
\end{proof}

\vspace{5 pt}

Although $\Q(\Xi_p) = M_u$, it is convenient to take points in the field $M$ and use the fixed prime $\lambda$ over 2 in $M$, to simplify notation.  We write $\Xi_p^m$ for the multiplicative subgroup scheme of $\Xi_p$ over $\Z_2$.  Then $\Xi_p^m(M,\lambda)$ denotes the corresponding  $\cD_{\lambda}(M/\Q)$-module and $\Xi_p^{et}(M,\lambda) = \Xi_p(M)/\Xi_p^m(M,\lambda)$.

\begin{cor} \label{MuZXi}
We have $\F_2$-bases:  \, $\Xi_p^m(M,\lambda) = \lr{v_j \mid j \, {\rm odd
}}$ and 
$$
\Xi_p^{et}(M,\lambda) = \lr{v_j  + \Xi_p^m(M,\lambda) \mid j \, {\rm even}}
$$ 
with $1 \le j \le h_2$.  Let $\Xi^{(j)}$ be the closed subgroup scheme of $\Xi_p$ attached to the Galois submodule $\lr{v_1, \dots, v_j}$ of $\Xi_p(M)$.   Successive quotients of 
$$
0 = \Xi_p^{(0)} \subset \Xi^{(2)}_p \subset \Xi^{(4)}_p \dots \subset \Xi^{(h_2)}_p
$$
are isomorphic to the Katz-Mazur group scheme $G_{-1}$.
\end{cor}

\begin{proof}
Since $\lr{v_j \mid j \text{ odd}} = (\sigma_{\lambda} - 1)(\Xi_p(M))$, it is contained in $\Xi_p^m(M,\lambda)$ by Remark \ref{InertiaOnVm}.  Self-duality of $\Gamma$ over $\Z_2$ implies that 
$$
\dim  \Xi_p^m(M,\lambda) = \dim \Xi_p^{et}(M,\lambda) = \textstyle{\frac{1}{2} } \dim \Xi_p(M) = \textstyle{\frac{1}{2} } h_2.
$$
Thus, $\Xi_p^m(M,\lambda) = \lr{v_j \mid j \, {\rm odd} }$ and $\Xi_p^{et}(M,\lambda) = \lr{v_j  + \Xi_p^m(M,\lambda) \mid j \, {\rm even} }$.   Each quotient $\Xi_p^{2j+2}/\Xi_p^{2j}$ is an extension of $\cZ_2$ by $\Mu_2$ of exponent 2 with field of points $\Q(i)$.  By Kummer theory, $G_{-1}$ represents this extension class.
\end{proof}

Next, we define certain subquotients of $\Xi_p$ for our analysis of $A[2]$ when $A$ is a prosaic abelian variety. 

\begin{Not} \label{dPhi}
Fix a Ringel basis $v_1, \dots, v_{h_2}$ for $\Xi_p(M)$.  Suppose that $2d+2 \le h_2$ and let $\Xi_p^{(2d+1)}$ be the closed subgroup scheme of $\Xi_p$ associated to the Galois module $\lr{v_1, \dots v_{2d+1}}$.  Set $\Phi_d = \Xi_p^{(2d+1)}\!/\Xi_p^{(1)}$.  In particular $\Xi_p^{(1)} \simeq \Mu_2$ and $\Phi_1 \simeq \Phi$. 
\end{Not}

The following definition will be used to highlight properties of $\Phi_d$.  By Proposition \ref{FGField}, if $V$ is in $\cA$ and $2V = 0$, then $\Q(V)$ is contained in $M$.  Fix the primes $\lambda = \lambda_M$ over 2 and $v$ over $p$ in $M$.  Recall that 
$$
\cR = \F_2[\Gal(M/\Q)] = \lr{\sigma_\lambda, \sigma_v}.
$$

\begin{Def}  \label{PhiBloc}
A group scheme $V$ in $\cA$ with $2V = 0$ is a $\Phi$-bloc if it is cyclic as a Galois module and has a filtration with exactly $d \ge 1$ successive quotients isomorphic to $\Phi$.  To specify $d$, we say $V$ is a $(d,\Phi)$-bloc.
\end{Def}

\begin{prop} \label{BlocProp}
Let $V$ be a group scheme of exponent $2$ in $\cA$.  Then $V$ is a $\Phi$-bloc if and only if both of the following conditions hold:
\begin{enumerate}[{\rm i)}]
\item There is a choice of $x$ in $V^m(M,\lambda)$ generating $V(M) = \cR x$. 
\vspace{2 pt} 

\item $V$ has no closed subgroup scheme isomorphic to $\Mu_2$. 
\end{enumerate} 
If so, $\dim_{\F_2} V(M) = 2d$ is even and $V(M)$ has a Ringel basis such that 
\begin{equation} \label{BlocRingel}
x = w_{2d} \xrightarrow{\sigma_v-1} w_{2d-1} \xrightarrow{\sigma_{\lambda}-1} w_{2d-2} \to \dots
\end{equation}
with $V^m(M,\lambda) = \lr{w_j \mid j \, {\rm even}}$ and 
$
V^{et}(M,\lambda) = \lr{w_j + V^m(M,\lambda) \mid j \, {\rm odd}}.
$  
If $d \ge 2$, then $\Gal(\Q(V)/\Q)$ is the dihedral group $\rD_{2^{e+1}}$, where $e = \lceil \log_2 d \rceil$.
\end{prop}  

\begin{proof}
Assume that $V$ is a $\Phi$-bloc.  Let $\cI$ be the augmentation ideal in $\cR$ and let $\cI(V(M))$ be the image of $\cI$ acting on $V(M)$.  By  Lemma \ref{GalToGrpSch}, there is a closed subgroup scheme $W$ of $V$ such that $W(M) = \cI(V(M))$.  Since $V(M)$ is a cyclic $\cR$-module, the quotient $V(M)/W(M)$ is 1-dimensional over $\F_2$.  But $V$ has no quotient isomorphic to $\cZ_2$ by Proposition \ref{Phi1}(iii), so $V/W \simeq \Mu_2$.  Let $\psi$ be the natural map to the quotient in the exact sequence 
$$
0 \to W \to V \xrightarrow{\psi} \Mu_2 \to 0.
$$  
By restricting the base to $\Z_2$ and taking the $\cD_{\lambda}(M/\Q)$-modules associated to multiplicative components, we obtain the exact sequence
$$
0 \to W^m(M,\lambda) \to V^m(M,\lambda) \xrightarrow{\psi} \Mu_2 \to 0.
$$
Thus there is an element $x$ in $V^m(M,\lambda)$ such that $\Mu_2 = \lr{\psi(x)}$ and so (i) holds by Nakayama's Lemma.  Also, item (ii) holds by Proposition \ref{Phi1}(ii).

For the converse, assume (i) and (ii).  By Lemma \ref{InertiaOnVm}, $\sigma_{\lambda}(x) = x$, since $x$ is in $V^m(M,\lambda)$.  Form a Ringel basis $x=w_k, w_{k-1}, \dots, w_1$ for $V(M)$ as in \eqref{BlocRingel}.  If $k = \dim V(M)$, then $k = 2d$ is even.  Otherwise, $w_1 = (\sigma_{\lambda}-1)(w_2)$ is in $V^m(M,\lambda)$ by Lemma \ref{InertiaOnVm}.  But then the closed subgroup scheme of $V$ corresponding to $\lr{w_1}$ is isomorphic to $\Mu_2$, contradicting (ii).

If $d = 1$, then $V$ is a non-split extension of $\Mu_2$ by $\cZ_2$, so $V \simeq \Phi$ and \eqref{BlocRingel} holds with trivial action of $\sigma_\lambda$.   Assume that $d \ge 2$, let $e = \lceil \log_2 d \rceil$ and let $L = \Q(V) \subseteq M$.   By Proposition \ref{GalRep} and Remark \ref{RingConverse}, $\Gal(L/\Q) \simeq \rD_{2^{e+1}}$.

If $2 \le j \le 2d-2$ is even, then $w_j$ is in $(\sigma_{\lambda}-1)(V(M))$ and therefore in $V^m(M,\lambda)$.   Since $w_{2d} = x$ is in $V^m(M,\lambda)$ by construction, we have $\dim W^m(M,\lambda) \ge d$.  In addition, the cosets
$w_j + V^m(M,\lambda)$ with $j$ odd and $1 \le j \le 2d-1$ are linearly independent in $V^{et}(M,\lambda) = V(M)/V^m(M,\lambda)$.  Otherwise, there is some $\F_2$-linear combination
$$
\sum_{j=0}^{d-1} \, \alpha_{2j+1} w_{2j+1} \,  \in \,  V^{m}(M,\lambda).
$$
Apply $\sigma_{\lambda}-1$, to find that $\sum_{j=1}^{d-1} \alpha_{2j+1} w_{2j} = 0$.  By linear independence, $\alpha_{2i+1} = 0$ for $i \ge 1$ and so $\alpha_1 w_1$ is in $V^m(M,\lambda)$.  Since $w_1$ is not in $V^m(M,\lambda)$, we have $\alpha_1 = 0$ as well.   It follows that $V^m(M,\lambda) = \lr{w_j \mid j \text{ even}}$ and 
$$
V^{et}(M,\lambda) = \lr{w_j + V^m(M,\lambda)  \mid j \text{ odd}},
$$ 
each of $\F_2$-dimension $d$.  By Proposition \ref{Phi1}, $V$ is filtered by copies of $\Phi$ and so $V$ is a $\Phi$-bloc.
\end{proof}

\begin{theo} \label{dPhiThm}
A $(d,\Phi)$-bloc $V$ exists if and only if $2d+2 \le h_2$, with $\Phi_d$ as an example.  Suppose that $d \ge 2$, let $e = \lceil \log_2 d \rceil$ and let $L = \Q(V)$.  Then: 
\begin{enumerate}[{\rm i)}]
\item $\Gal(L/\Q)$ is isomorphic to the dihedral group $\rD_{2^{e+1}}$, $L$ is the cyclic extension of $K$ of degree $2^{e+1}$ contained in $M_u$ and $2^{e+1} \vert h_2$.
\item If $d$ is a power of $2$, then $L$ is contained in $M_s$ and $2^{e+2} \vert h_2$.
\item In general, $8 \vert h_2$ and $p$ is in {\bf P1}.
\end{enumerate}
\end{theo}

\begin{proof}
Assume that $2d+2 \le h_2$ and let $v_1, \dots, v_{2d+1}$ be a Ringel basis for $\Xi_p^{(2d+1)}(M_u)$.  Then a Ringel basis for $\Phi_d$ is given by the cosets
\begin{equation} \label{PhidBasis}
y_j = v_{j+1} + \Xi_p^{(1)}, \hspace{20 pt}  j = 1, \dots, 2d
\end{equation}
Thus $\Phi_d(M)$ is a cyclic $\cR$-module of exponent 2.  Corollary \ref{MuZXi}, implies that the successive quotients  $\Phi_j/\Phi_{j-1}$ in the filtration
$$
0 = \Phi_0 \subset \Phi_1 \subset \dots \subset \Phi_{d}
$$
are extensions of $\Mu_2$ by $\cZ_2$ with field of points $\Q(\sqrt{p})$, therefore isomorphic to $\Phi$.  Hence $\Phi_d$ satisfies Definition \ref{PhiBloc} for a $(d,\Phi)$-bloc.

Next we verify items (i), (ii), (iii) for any $(d,\Phi)$-bloc $V$ with $d \ge 2$, using the Ringel basis in \eqref{BlocRingel}.  Then $\Gal(L/\Q) \simeq \cD_{2^{e+1}}$ and $L$ is a cyclic extension of $K$ of degree $2^{e+1}$ contained in $M$ by Proposition \ref{BlocProp}.  Also, $w_1$ is not in $V^m(M,\lambda)$.  By Corollary \ref{RayClass1b}, the decomposition group $\cD_{\lambda}(M/K)$ is given by 
$$
\cD_{\lambda}(M/K) = \Gal(M/M_s) = \lr{\tau_{\lambda}},
$$
where $\tau_{\lambda} = (\sigma_{\lambda} \sigma_v)^{h_2/2}$ has order 4.   If $h_2 \le 2d$ then, by Lemma \ref{tAction}, 
$$
\tau_{\lambda}(w_{h_2}) = w_1 + w_2 + w_4 + \dots + w_{h_2}
$$
is in $w_1 + V^m(M,\lambda)$ and therefore, not in $V^m(M,\lambda)$.  But $\tau_{\lambda}$ preserves $V^m(M,\lambda)$.  This contradiction implies that 
$$
h_2 \ge 2d+1 > 2^e +1,  \, \text{ where } \, e = \lceil \log_2 d \rceil.
$$
Since $h_2$ is a power of 2, we find that $2^{e+1} \vert h_2$.  But $\fdeg{L}{K} = 2^{e+1}$ and $\fdeg{M_u}{K}= h_2$, so $L$ is contained in $M_u$ by Corollary \ref{RayClass1b}.  If $d$ is a power of 2, then $d = 2^e$ and so $2^{e+2} \vert h_2$.  Then $L$ is contained in $M_s$, since $\fdeg{M_s}{K} = h_2/2$.  In all cases, $2d+2 \le h_2$.  Also, $8 \vert h_2$ and so $p$ is in $\text{\bf P1}$ by \eqref{h2}.  
\end{proof}

\begin{lem}  \label{VaVb}
A $(d,\Phi)$-bloc $V$ has a filtration by $(j,\Phi)$-blocs $V_j$:
$$
0 \subset V_1 \subset V_2 \subset \dots \subset V_{d-1} \subset V.
$$
Furthermore, $W_{d-j} = V/V_j$ is a $(d-j,\Phi)$-bloc and $V$ is a non-trivial extension of $W_{d-j}$ by $V_j$.  In particular, $V_1$ is the unique closed subgroup scheme of $V$ isomorphic to $\Phi$ and $\Hom_{\cA}(\Phi,V) = \F_2$.
\end{lem}

\begin{proof}
Fix a Ringel basis $w_1, \dots, w_{2d}$ for $V$ as in \eqref{BlocRingel} and let $V_j$ tbe the closed subgroup scheme of $V$ corresponding to the cyclic Galois module $\lr{w_1, \dots w_{2j}} = \cR w_{2j}$.  Then $V_j$ is a $(j,\Phi)$-bloc.  The quotient $W_{j'} = V/V_j$ is filtered by $j' = d-j$ copies of $\Phi$ and is cyclic as a Galois module, so it is a $(j',\Phi)$-bloc.  Since the Galois module $V(M)$ is indecomposable, $V$ represents a non-trivial extension class of $W_{j'}$ by $V_j$. 

If $U$ is a closed subgroup scheme of $V$ isomorphic to $\Phi$, then its Galois module $U(M)$ is annihilated by the ideal 
\begin{equation*}
\ga = ((\sigma_v-1)(\sigma_{\lambda}-1), \, (\sigma_{\lambda}-1)(\sigma_v-1)).
\end{equation*}
Since $\lr{w_1,w_2}$ is the unique 2-dimensional Galois submodule of $V(M)$ with this annihilator ideal, $U$ is the corresponding closed subgroup scheme of $V$, namely $V_1$.  Let us identify $\Phi$ with $V_1$.  If $f$ is in $\Hom_{\cA}(V_1,V)$ then $f$ preserves the multiplicative component $V_1^m(M,\lambda) = \lr{w_2}$.  It follows that $\Hom_{\cA}(V_1,V) = \F_2$, generated by the closed immersion $V_1 \to V$.
\end{proof} 

\vspace{5 pt} 

Recall the definitions of {\bf P1} and ${\bf P1^*}$ in \eqref{P1P3} and \eqref{P1*}.  Schoof  \cite{Sch2a} showed that if $p \equiv 1 \mymod{8}$, then
\begin{equation} \label{Schoof2a}
\dim_{\F_2} \Ext^1_{\cA}(\Phi,\Phi)=\begin{cases} 2\,\text{ if }\, p \in {\bf P1^*}, \\ 1\,\text{ otherwise. }\end{cases}
\end{equation}
(Schoof also treats $p \equiv 7 \pmod{8}$, not required here.)  Although the above result does not appear in the published version, what we need can be gleaned from \cite[Prop.~2.5, 2.6]{Sch4}, as follows.

\begin{prop}[Schoof] \label{SchoofDim}
If $p \equiv 9 \mymod{16}$ is in {\bf P1}, then $\Ext^1_{\cA}(\Phi,\Phi)$ has order $2$, generated by $[\Phi_2]$.  Any $(2,\Phi)$-bloc $V_2$ is an extension of $\Phi$ by $\Phi$ such that  $[V_2] = [\Phi_2]$ and $V_2 \simeq \Phi_2$.
\end{prop}

\begin{proof}
Since $p$ is in {\bf P1}, $h_2$ is divisible by 8, so $\Phi_2$ can be constructed.  It represents a non-trivial class in $\Ext^1_{\cA,[2]}(\Phi,\Phi)$ because its Galois module is indecomposable.

Applied to $0 \to \cZ_2 \xrightarrow{i} \Phi \xrightarrow{j} \Mu_2 \to 0$, the long exact sequence for $\Hom_\cA(-,\Phi)$ gives exactness of
$$
\to \Ext_{\cA}^1(\Mu_2,\Phi) \to \Ext^1_{\cA}(\Phi,\Phi)  \xrightarrow{j^*} \Ext^1_{\cA}(\cZ_2,\Phi) \to
$$
It follows from \cite[Prop.\!~2.5]{Sch4} that
$$
\Ext^1_{\cA}(\Mu_2,\Phi) = 0 \quad \text{and} \quad \dim_{\F_2}\Ext^1_{\cA}(\cZ_2,\Phi) = 2.   
$$
Thus,  $j^*$ is injective and so $j^*([\Phi_2])$ is a non-trivial class in $\Ext^1_{\cA,[2]}(\cZ_2,\Phi)$.  Schoof \cite[p.~5]{Sch4} describes an extension $\Upsilon$ of $\cZ_2$ by $\Phi$ such that $2 \Upsilon \ne 0$, so $[\Upsilon]$ and $j^*([\Phi_2])$ generate $\Ext^1_{\cA}(\cZ_2,\Phi)$.  But $[\Upsilon]$ is not in the image of $j^*$ when $p \equiv 9 \mymod{16}$ by \cite[Prop.~2.6]{Sch4}.  Hence $\Ext^1_{\cA}(\Phi,\Phi)$ has order 2, generated by $[\Phi_2]$.  According to Lemma \ref{VaVb}, any $(2,\Phi)$-bloc $V_2$ also is a non-trivial extension of $\Phi$ by $\Phi$, so $[V_2] = [\Phi_2]$ and $V_2 \simeq \Phi_2$.
\end{proof}

\begin{prop}  \label{PhiPhid}
Assume that $p \equiv 9 \mymod{16}$ is in {\bf P1} and $2d+2 \le h_2$.  If $V_d$ is a $(d,\Phi)$-bloc, then $V_d$ is isomorphic to $\Phi_d$.  
\begin{enumerate}[{\rm i)}]
\item If $2d+4 \le h_2$, then $\Ext^1_\cA(\Phi,\Phi_d) = \F_2$ and the non-trivial extension class is represented by 
$
0 \to \Phi_d \to \Phi_{d+1}  \to \Phi \to 0.
$ 
\vspace{2 pt}
\item If $2d+2 = h_2$, then $\Ext^1_{\cA,[2]}(\Phi,\Phi_d) = 0$.
\end{enumerate}
\end{prop}

\begin{proof} 
Since $2d+2 \le h_2$, existence of a $(d,\Phi)$-bloc $V_d$ is guaranteed.  Also, $h_2$ is a multiple of 8 when $p$ is in {\bf P1}.  Let $V_j$ be a $(j,\Phi)$-bloc in the filtration in Lemma \ref{VaVb}.  We have a non-trivial extension
\begin{equation}  \label{BlocExt}
0 \to V_{d-1} \to V_d \to \Phi \to 0
\end{equation} 
and $\Hom_\cA(\Phi,V_j) \simeq \F_2$.  The long exact sequence for $\Hom_\cA(\Phi,-)$ applied to \eqref{BlocExt} therefore gives
\begin{equation} \label{PhiByPhi}
\begin{array}{l}  
  \Ext^1_\cA(\Phi,V_{d-1})  \leftarrow \F_2  \leftarrow \F_2 \leftarrow \F_2 \leftarrow 0 \\
 \hspace{37 pt}  \downarrow  \\
 \hspace{4 pt} \Ext^1_\cA(\Phi,V_d) \xrightarrow{f} \Ext^1_\cA(\Phi,\Phi) \to \dots 
\end{array}
\end{equation}

Suppose that $2d+4 \le h_2$, so $\Phi_{d+1}$ exists.  As an induction hypothesis, assume that for $2 \le j \le d$, we have $\Ext^1_\cA(\Phi,\Phi_{j-1}) \simeq \F_2$, generated by $[\Phi_j]$.  This holds for $j=2$ by Proposition \ref{SchoofDim}.  Since $[V_j]$ is not trivial in $\Ext^1_\cA(\Phi,\Phi_{j-1})$, it follows that $[V_j] = [\Phi_j]$ and so $V_j \simeq \Phi_j$.

We can therefore replace $V_{d-1}$ by $\Phi_{d-1}$ and $V_d$ by $\Phi_d$ in \eqref{PhiByPhi}, to deduce that $f$ is an injection
$$
f\!: \,  \Ext^1_\cA(\Phi,\Phi_d) \to \Ext^1_\cA(\Phi,\Phi) \simeq \F_2.
$$
Hence the non-trivial class $[\Phi_{d+1}]$ in $\Ext^1_\cA(\Phi,\Phi_d)$ generates $\Ext^1_\cA(\Phi,\Phi_d) \simeq \F_2$. This completes the induction argument when $j = d+1$, so (i) is proved.

Next, assume that $2d+2 = h_2$ for existence of $\Phi_d$ and that 
$$
0 \to \Phi_d \to V \xrightarrow{g} \Phi \to 0
$$
represents a class in $\Ext^1_\cA(\Phi,\Phi_d)$ with $2V = 0$.  Recall that $\Phi^m \simeq \Mu_2$ over $\Z_2$.  Taking the $\cD_\lambda(M/\Q)$-modules associated to the connected components at $\lambda$ leads to the exact sequence
\begin{equation} \label{2dSeq}
0 \to \Phi_d^m(M,\lambda) \to V^m(M,\lambda) \xrightarrow{g} \Mu_2 \to 0.
\end{equation}
Hence we can find a point $x$ in $V^m(M,\lambda)$ such that $g(x)$ generates $\Mu_2$.   If $y = (\sigma_v-1)(x)$, then $\{g(y), g(x)\}$ is an $\F_2$-basis for $\Phi(M)$.  Furthermore, $\sigma_\lambda$ acts trivially on $g(y)$.   By Lemma \ref{InertiaOnVm}, $(\sigma_\lambda-1)(V(M))$ is contained in $V^m(M,\lambda)$.    Using the basis for $V^m(M,\lambda)$ in Proposition \ref{BlocProp}, we can therefore write
$$
(\sigma_\lambda-1)(y) = \alpha_1 w_2 + \alpha_2 w_4 + \dots + \alpha_d w_{2d}.
$$
Replace $x$ and $y$ by
$$
x' = x+\sum_{j=1}^{d-1} \alpha_j w_{2j+2} \quad \text{and} \quad  y' =( \sigma_v-1)(x') = y+ \sum_{j=1}^{d-1} \alpha_j w_{2j+1},
$$ 
respectively, to arrange that $(\sigma_\lambda-1)(y') = \alpha_d w_{2d}$.   

Suppose that $\alpha_d = 0$.  Then $\lr{x',y'}$ is a Galois submodule of $V$ with exponent 2 and field of points $\Q(\sqrt{p})$.  It follow that the corresponding closed subgroup scheme $U$ of $V$ is isomorphic to $\Phi$ and so the sequence \eqref{2dSeq} splits.  Hence $[V] = 0$ in $\Ext^1_\cA(\Phi,\Phi_d)$.

Suppose instead that $\alpha_d = 1$.  Then the Galois module $V(M) = \cR x'$ is cyclic and so $V$ is a $(d+1,\Phi)$-bloc.  But then $2d+4 \le h_2$ by Theorem \ref{dPhiThm} and we have a contradiction.  Thus, item (ii) holds.
\end{proof}

\section{Application to abelian varieties} \label{AbVar}

Let $\ell$ be a rational prime, $F$ a number field and $A_{/F}$ an abelian variety with good reduction outside $\ell$. Then $\Mu_\ell$ is contained in the $\ell$-division field $F(A[\ell])$ by \cite[Lemma 1]{BK1}.  We say $A$ is {\em $\ell$-prosaic} if it is semistable and $F(A[\ell])$ is an $\ell$-extension of $F(\Mu_\ell)$.  This also generalizes  the notion of {\em heavenly} abelian variety in \cite{RT}.  After Proposition \ref{IsogenyInvariant}, we only consider $\ell = 2$.  The main results of this section are in Theorems \ref{1Mod8} and \ref{A[2]bloc}.

\begin{prop}  \label{IsogenyInvariant}
The $\ell$-prosaic property of abelian varieties is an isogeny invariant.
\end{prop}

\begin{proof}
Let $\varphi\!: A \to A'$ be an isogeny of abelian varieties with quasi-inverse $\varphi'\!: \, A' \to A$, so that $\varphi \varphi'$ and $\varphi' \varphi$ are multiplication by $\deg \varphi$ on $A'$ and on $A$, respectively.  Let $\ell^m$ be the power of $\ell$ in the exponent of $\deg \varphi$.  If $P'$ is in $A'[\ell^n]$, there is a pre-image $P$ in $A[\ell^{n+m}]$ satisfying $\varphi(P) = P'$.  Hence, $F(A'[\ell^\infty])$ is contained in $F(A[\ell^\infty])$.  Use $\varphi'$ for the reverse inclusion to conclude that $ F(A'[\ell^\infty]) =F(A[\ell^\infty])$.  Note that $F(A[\ell^\infty])$ is a pro-$\ell$ extension over $F(A[\ell])$.  It follows that $A$ is $\ell$-prosaic if and only if $F(A[\ell^\infty])$ is an $\ell$-extension of $F(\Mu_\ell)$, so the $\ell$-prosaic property is isogeny invariant.

By the criterion of Ogg-N\'{e}ron-Shafarevich, good reduction is preserved under isogeny.  For bad primes $\gq$, Grothendieck's semistability criterion \cite[Prop.\! 5.13, p.\! 70]{Gro} applies.  Then $A$ is semistable at $\gq$ if and only if the inertia group at each place $\gQ$ over $\gq$ in $\Gal(F_q(A[\ell^\infty])/F_\gq)$ is tame, pro-cyclic and a generator $\sigma$ satisfies $(\sigma-1)^2 = 0$ on $A[\ell^\infty]$.  Since the isogeny $\varphi$ induces an isomorphism of Tate vector spaces 
$$
\varphi_*\!: \, V_\ell(A_{/F_\gq}) \xrightarrow{\sim}  V_\ell(A'_{/F_\gq}),
$$
semistability holds for $A$ if and only if it also holds for $A'$.  
\end{proof}

We now return to prosaic abelian varieties over $\Q$.  When $A$ is semistable and $F=\Q$, all geometric endomorphisms are defined over $\Q$ by \cite{Rib1}.

\begin{lem} \label{MultMirage}
If $A_{/\Q}$ is prosaic and $\End A=\Z$, then there is a $\Q$-isogenous abelian variety $B$ such that $B[2]$ has no subgroup scheme isomorphic to $\Mu_2$.
\end{lem}

\begin{proof}
For each $C$ isogenous to $A$, let $\gC(C)$ be a maximal subgroup scheme of $C[2]$ over $\Z[\frac{1}{p}]$, multiplicative over $\Z_2$.  An isogeny $\varphi\!: \, C_1 \to C_2$ preserves this multiplicative property, so $\varphi(\gC(C_1))$ is contained in $\gC(C_2)$.    

If $\gC(C) \ne 0$ for all $C$, then we can form a chain of $\Q$-isogenies 
$$
C_1 \to \dots \to C_j \xrightarrow{\varphi_j} C_{j+1} \to \dots
$$ 
with non-trivial $\ker \varphi_j = \gC(C_j)$ for all $j$.  By a theorem of Faltings, some $C_j$ and $C_k$ with are isomorphic for $j < k$.  The composition of the corresponding chain of isogenies is an endomorphism $\theta$ of $B = C_j$ whose kernel is entirely multiplicative over $\Z_2$.  Since $\End A = \Z$, we have $\theta=2^r$ for some $r > 0$. But the multiplicative and \'etale constituents of $B[2^r]$ have the same size \cite[p.147]{Mum}, so this is a contradiction.
\end{proof}

\begin{Rem}
The construction in the proof of Lemma \ref{MultMirage} is an example of a {\em mirage}, used in several different contexts in \cite[\S6]{BK3}.
\end{Rem}

Let $q$ range over the distinct primes dividing an odd integer $N$ and set
$$
\Omega(N) = \sum_{q \vert N} \ord_q(N) \quad \text{and} \quad \Omega_2(N) = \sum_{q \vert N, \, q \equiv \pm 1 \mymod{8}} \ord_q(N).
$$

\begin{theo} \label{1Mod8}
Let $A_{/\Q}$ be a prosaic abelian variety with odd conductor $N$ and dimension $g$.  Then
\begin{equation} \label{GenBd}
2g \le \Omega(N) + \Omega_2(N). 
\end{equation}
If, in addition, $A$ is good outside one prime $p$, then $p \equiv 1 \mymod{8}$ and $N = p^g$, that is, $A$ is totally toroidal at $p$.
\end{theo}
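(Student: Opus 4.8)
The plan is to establish the conductor bound \eqref{GenBd} first, since that is where the content lies, and then read off the one‑prime assertions.

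\emph{Setup and proof of \eqref{GenBd}.} Because $N$ is odd, $A[2]$ is a finite flat commutative group scheme over $\Z[1/N]$, which by Grothendieck's criterion \cite{Gro} and the prosaic hypothesis lies in the evident analogue over $\Z[1/N]$ of the category $\cA$ of \S\ref{GrpSchms}: by Oort--Tate \cite{TO} its only simple constituents are $\Mu_2$ and $\cZ_2$, and by \cite[Lemma 3.2.1]{BK3} the multiplicative and \'etale parts of $A[2]$ at $2$ have equal size, so a composition series has exactly $g$ copies of each. I would first record the local input at $2$ generalizing \eqref{MuZ}: the flatness obstruction $\Ext^1_{\Z_2}(\Mu_2,\cZ_2)=0$ together with Fontaine's bound \eqref{FontBd} forces any non‑split extension of $\Mu_2$ by $\cZ_2$ that is ramified at only one prime $q\mid N$ --- equivalently, any $(1,\Phi)$‑bloc supported at a single $q$, and likewise any $(d,\Phi)$‑bloc at a single $q$ --- to satisfy $q\equiv\pm1\mymod{8}$. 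Passing now, via the multiplicative mirage of Lemma \ref{MultMirage} (``prosaic'' and $N$ are isogeny invariants), to $B$ isogenous to $A$ with $B[2]$ free of $\Mu_2$‑subgroup schemes, $B[2]$ becomes balanced in the sense of Definition \ref{balanced}, hence, by the nugget theory of \cite[\S4]{BK4} (generalizing Lemma \ref{Phi1} and Theorem \ref{DirectSum}), a direct sum of balanced nuggets; a nugget of $\F_2$‑dimension $2d$ is assembled from $d$ copies each of $\Mu_2$ and $\cZ_2$ along a chain of $d$ successive $\Phi$‑type pieces and is ramified at a set $S$ of primes dividing $N$, contributing toroidal dimension $t_q\ge1$ at each $q\in S$. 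The combinatorial heart is then the per‑nugget estimate $2d\le\sum_{q\in S}t_q+\sum_{q\in S,\ q\equiv\pm1\,(8)}t_q$: one checks that each $\Phi$ in the chain contributes $\ge1$ to $t_q$ for every prime $q$ at which it ramifies, that a $\Phi$ ramified at only one prime needs that prime $\equiv\pm1\mymod{8}$, and that an elementary count of ramification incidences of the $d$ pieces --- each ``worth'' $2$ unless it ramifies at a single $\equiv\pm1$ prime --- yields the inequality. Summing over the nuggets, whose dimensions add to $2g$ and whose toroidal contributions add to $\Omega(N)$ (and to $\Omega_2(N)$ over the primes $\equiv\pm1\mymod{8}$), gives $2g\le\Omega(N)+\Omega_2(N)$; discarding the congruence recovers $g\le\Omega(N)$ of \cite[Cor.~5.4]{BK3}. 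This per‑nugget accounting --- how each multiplicative constituent is paid for by a unit of toroidal dimension, a prime $\equiv\pm1\mymod{8}$ being worth twice a prime $\equiv3,5\mymod{8}$ --- is the main obstacle, and is exactly where $\Ext^1_{\Z_2}(\Mu_2,\cZ_2)=0$ and the Fontaine bound are used decisively.

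\emph{One prime: the easy deductions.} Assume $g\ge1$ (for $g=0$ there is nothing to prove) and that $A$ is good outside a single prime $p$, so $N=p^{\,t}$ with $t=\ord_p(N)$ the toroidal dimension at $p$; here $t\le g$ always, and $t\ge1$ because an abelian variety over $\Q$ with everywhere good reduction vanishes. If $p\equiv3$ or $5\mymod{8}$ then $\Omega_2(N)=0$, and \eqref{GenBd} gives $2g\le t\le g$, impossible; hence $p\equiv\pm1\mymod{8}$. Then $\Omega_2(N)=t$, so \eqref{GenBd} reads $2g\le2t$, forcing $t=g$. Thus $N=p^g$ and $A$ is totally toroidal at $p$.

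\emph{Excluding $p\equiv-1\mymod{8}$.} It remains to rule out $p\equiv3\mymod{4}$. For such $p$, genus theory makes the class number of $K=\Q(\sqrt{-p})$ odd, so $h_2=1$: $K$ has no nontrivial unramified $2$‑extension, and in particular no $(d,\Phi)$‑bloc with $d\ge2$ exists. Re‑running the argument of Proposition \ref{FGField} --- now $2$ splits in $K$, so $\tilde{K}=\Q_2$, and \eqref{FontBd} still bounds the ramification of $\Q_2(B[2])$ over $\Q_2$ --- confines $\Q(B[2])$, for $B$ from the mirage, to an extension of $K$ small enough that the balanced group scheme $B[2]$ is a direct sum of copies of $\Phi$ (the sole $\Phi$‑bloc then available; cf.\ Theorem \ref{DirectSum}). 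Since $p\equiv-1\mymod{8}$ this $\Phi$ has field of points $\Q(\sqrt{-p})$ and is split over $\Z_2$, so $B$ is totally toroidal at $p$, has good reduction at $2$, and $B[2]_{|\Z_2}$ splits as the direct sum of its multiplicative part ($g$ copies of $\Mu_2$) and its \'etale part ($g$ copies of $\cZ_2$). I would then extract a contradiction from this configuration --- for instance by examining $B[4]$, which sits in $0\to B[2]\to B[4]\to B[2]\to0$ (the last map being multiplication by $2$) as an extension of $\Phi^{\oplus g}$ by $\Phi^{\oplus g}$ for which, when $h_2=1$, there is no room over $\Z[1/p]$ to produce a group scheme good at $2$ with a point of exact order $4$; or, alternatively, by a root‑discriminant computation extending the elliptic‑curve analysis of Neumann and Setzer \cite{Neu,Set}. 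Hence $p\equiv1\mymod{8}$. This exclusion of $p\equiv7\mymod{8}$ is the second delicate point: unlike everything above it is not a purely local statement, and --- as already for elliptic curves --- it rests in the end on a discriminant estimate rather than on group‑scheme formalism alone.
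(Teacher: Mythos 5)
There is a genuine gap, and it sits exactly where you yourself flag the ``second delicate point.'' The paper's proof is short: it quotes the general bound \eqref{GenBd} from \cite[Thm.~5.3, Cor.~5.4]{BK3}, quotes $p\equiv 1\mymod{4}$ from \cite[Prop.~5]{BK1} for an abelian variety of conductor $p^k$ satisfying the hypotheses, and then the two congruence cases $p\equiv 1$ and $p\equiv 5\mymod{8}$ of \eqref{GenBd} combined with $k\le g$ force $p\equiv 1\mymod{8}$ and $k=g$. Your middle paragraph reproduces this last, easy step correctly. But your third paragraph, which is supposed to exclude $p\equiv 7\mymod{8}$, never actually produces a contradiction: you set up the configuration ($h_2=1$, $B[2]$ a sum of copies of $\Phi$ with field of points $\Q(\sqrt{-p})$, split over $\Z_2$) and then write ``I would then extract a contradiction \dots for instance by examining $B[4]$ \dots or, alternatively, by a root-discriminant computation.'' Neither alternative is carried out, and the configuration you describe is not self-evidently impossible --- indeed, as you note, for elliptic curves the exclusion of $p\equiv 3\mymod 4$ already requires a genuine discriminant/arithmetic input, which is precisely what \cite[Prop.~5]{BK1} supplies and what your argument is missing. (You also invoke analogues of Lemma \ref{Phi1} and Theorem \ref{DirectSum} for $p\equiv 7\mymod 8$, which the paper only establishes for $p\equiv 1\mymod 8$.) Without this step the theorem is not proved.

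A secondary issue: the bound \eqref{GenBd} is not proved in this paper at all --- it is the ``general bound'' of \cite[Thm.~5.3, Cor.~5.4]{BK3} and is simply cited. Your attempt to rederive it by a per-nugget count is in the right spirit, but the key combinatorial claim (``each piece is worth $2$ unless it ramifies at a single prime $\equiv\pm 1\mymod 8$,'' and the assertion that $d$ chained $\Phi$-type pieces ramified at $q$ force toroidal dimension $\ge d$ at $q$) is asserted rather than checked, so as written it would not stand alone either. If you intend to cite \cite{BK3} for \eqref{GenBd}, as the paper does, that is fine; but then the burden of the proof falls entirely on the $p\equiv 3\mymod 4$ exclusion, which must be completed, e.g., by citing or reproving \cite[Prop.~5]{BK1}.
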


\begin{proof}
The conductor exponent $\ord_q(N)$ at a bad prime $q$ is the toroidal dimension of $A$ at $q$ and is at most $g$.   According to the {\em general bound} in \cite[Thm.~5.3, Cor.~5.4]{BK3}, if $A$ is prosaic, then \eqref{GenBd} holds.  If $N = p^k$ is a power of one prime $p$, then $p \equiv 1 \mymod{4}$ by \cite[Prop.~5]{BK1} and so \eqref{GenBd} gives
$$
2k \le 2g \le \begin{cases}  2k &\text{if } p \equiv 1 \mymod{8}, \\  
                                                       \hfill k &\text{if } p \equiv 5 \mymod{8}.  \end{cases}
$$
Hence $k = g$ and $p \equiv 1 \mymod{8}$.    
\end{proof}

For the rest of this section, let $A$ satisfy \eqref{AIntro}, with $p \equiv 1 \mymod{8}$ and $\End  A=\Z$. We fix some notation regarding the Tate module $T_2(A)$.

Write $L_n = \Q(A[2^n])$ for the field generated by the points of $A[2^n]$ and $L_\infty = \cup \, L_n$.  Fix a prime $\Lambda$ over 2 in $L_\infty$ and let $\lambda_n$ be its restriction of $L_n$.  See Notation \ref{Vm} for the definition of the multiplicative Galois module associated to the decomposition group $\cD_{\lambda_n}(L_n/\Q)$, namely $\cM_n =  A[2^n]^m(L_n,\lambda_n)$.  Let 
$$
\cM_\infty = T_2(A)^m(L_\infty, \Lambda) = \lim_{\leftarrow} \cM_n.
$$
Let $\zeta_\infty = \displaystyle{\lim_{\leftarrow} \zeta_n}$ in $T_2(A)$, where $\zeta_n$ generates $\Mu_{2^n}$ in $L_n$ and $\zeta_{n+1}^2 = \zeta_n$.  Choose $\sigma_\Lambda$ and $\tau_\Lambda$ in the inertia group $\cI_\Lambda(L_\infty/\Q)$ satisfying $\sigma_\Lambda(\zeta_\infty) = \zeta_\infty^{-1}$ and $\tau_\Lambda(\zeta_\infty) = \zeta_\infty^{5}$.  Then $\sigma_\Lambda$ and $\tau_\Lambda$ act trivially on the \'{e}tale quotient $T_2(A)/\cM_\infty$.  By duality, they act on $\cM_\infty$ as they do on $\zeta_\infty$.  It follows that $\sigma_\Lambda$ is an involution on $T_2(A)$.  

Fix a pro-2 generator $\sigma_\gP$ of the inertia group $\cI_\gP(L_\infty/\Q)$ at a prime $\gP$ over $p$.  By Grothendieck's criterion for semistability, $(\sigma_\gP-1)^2 = 0$ on $T_2(A)$.  Because $L_\infty$ is unramified outside $\{2,p,\infty\}$, the maximal elementary 2-extension of $\Q$ in $L_\infty$ is $\Q(\sqrt{p},\zeta_8)$.  Hence $G_\infty = \lr{\sigma_\gP, \sigma_\Lambda, \tau_\Lambda}$, by the Burnside basis theorem.  Write $\Z_2[[G_\infty]]$ for the completed group ring of $G_\infty$ over $\Z_2$. 

Recall that $M$ is the maximal 2-primary ray class field over $K$ of conductor $2\cO_K$.  There is little risk of confusion with the notation in \S\ref{Arith} if we write $\sigma_\lambda$ for the restriction of $\sigma_\Lambda$ and $\sigma_v$ for the restriction of $\sigma_\gP$ to $L_\infty \cap M$.  Also, $\tau_\Lambda$ is trivial on $L_\infty \cap M$.
 
\begin{Not} \label{Isogenies}
If $t$ is in $T_2(A)$, write $t_n$ for its projection to $A[2^n] = T_2(A)/2^n T_2(A)$.  Fix a choice of $x$ in $\cM_\infty$, such that $x_1 \ne 0$.  For $n \ge 1$, let $V_n$ denote the closed subgroup scheme of $A[2^n]$ whose Galois module is $\Z_2[[G_\infty]] x_n$ and let $f_n\!: \, A \to A_n = A/V_n$ be the natural $\Q$-isogeny with kernel $V_n$. When $n'> n,$ the inclusion $V_n \to V_{n'}$ leads to the induced exact sequence 
\begin{equation} \label{psi}
0\to V_{n'}/V_n \to A_n \xrightarrow{\psi_{n,n'}} A_{n'} \to 0
\end{equation} 
with $f_{n'} = \psi_{n,n'} \, f_n$.  By convention, $A_0 = A$ and $V_0 = 0$.
\end{Not}

For $n \ge 0$, $\kappa_n=V_{n+1}/V_n $ has exponent 2 in $A_n = A/V_n$, so $\Q(\kappa_n)$ is contained in $M$ by Proposition \ref{FGField}.  Thus, $\Z_2[[G_\infty]]$ acts on $\kappa_n(M)$ via $\cR = \F_2[\sigma_v,\sigma_\lambda]$ and $\kappa_n(M)= \cR \, f_n(x_{n+1})$ is a cyclic Galois module.  Recall that a group scheme in $\cA$ is balanced if it has a filtration with successive quotients isomorphic to $\Phi$.   

\begin{lem} \label{TowerLemma}
If $A$ satisfies \text{\eqref{AIntro}} and $A[2]$ is balanced, then:
\begin{enumerate}[\rm i)]
\item $\kappa_n$ is a $(d_n,\Phi)$-bloc in $A_n[2]$ for some $d_n$, with $1 \le d_n \le g$ and $V_{n+1}$ is balanced.   \vspace{2 pt}
  
\item $A_n[2]$ is balanced.   \vspace{2 pt}

\item $\vv{V_n} = 2^{2(d_0 + \dots + d_{n-1})}$.  \vspace{2 pt}

\item If $\theta x_{n+k}$ is in $A[2^n]$ for some integer $\theta$, then $2^k$ divides $\theta$.
\end{enumerate}
\end{lem}

\begin{proof}
The filtration of $A[2^n]$ by copies of $A[2]$ shows that $A[2^n]$ also is balanced.  According to the equivalent conditions in Proposition \ref{Phi1}, $A[2^n]$ therefore has no $\Mu_2$-subgroup scheme.  But $V_n$ is contained in $A[2^n]$, so the same holds for $V_n$ for all $n \ge 1$.  We validate (i) for $n \ge 0$ by induction.  In particular, (i) holds for $\kappa_0 = V_1$ by Proposition \ref{BlocProp}.  

Since $k_{n+1} = V_{n+2}/V_{n+1}$, the induction hypothesis and Lemma \ref{NoMu} imply that $\kappa_{n+1}$ has no subgroup scheme isomorphic to $\Mu_2$.  By construction, $\kappa_{n+1}(M)$ is a cyclic Galois module.  Thus $k_{n+1}$ also is a $\Phi$-bloc by Proposition \ref{BlocProp} and so $\kappa_{n+1}$ is filtered by copies of $\Phi$.  But $V_{n+2}$ is an extension of $\kappa_{n+1}$ by $V_{n+1}$, so $V_{n+2}$ is filtered by copies of $\Phi$.  Hence $V_{n+2}$ is balanced.  This completes the induction argument for item (i).  To finish the proof of (i), note that $d_n \le g$ because
$$
2d_n = \dim_{\F_2} \kappa_n \le \dim_{\F_2} A_n[2] = 2g.
$$

To verify (ii), it suffices to show that $A_n[2]$ has no $\Mu_2$-subgroup scheme, since $\vv{A_n[2]^m} = \vv{A_n[2]^{et}}$.  The case $n = 0$ is given.  Suppose that $A_n[2]$ has no $\Mu_2$-subgroup scheme.  Then there is no $\Mu_2$-subgroup scheme in $A_n[4]$.  Since $\kappa_n$ is filtered by copies of $\Phi$, Lemma \ref{NoMu} implies that there is no $\Mu_2$-subgroup scheme in the following image of $\psi_{n,n+1}$:
$$
0 \to \kappa_n \to A_n[4] \xrightarrow{\psi_{n,n+1}} \psi_{n,n+1}(A_n[4]) \to 0.
$$  
But $A_{n+1}[2]$ is contained in $\psi_{n,n+1}(A_n[4])$, so there is no $\Mu_2$-subgroup scheme in $A_{n+1}[2]$.  For item (iii), we have 
$$
\vv{V_n} = \vv{\kappa_1} \cdots \vv{\kappa_{n-1}} = 2^{2(d_0 + \dots + d_{n-1})}.
$$   
Finally, if $\theta x_{n+k}$ is in $A[2^n] = 2^k A[2^{n+k}]$, then $\theta$ is a multiple of $2^k$ because $x$ is not in $2T_2(A)$, by assumption.  Hence (iv) holds.
\end{proof}

Refer to the classification of primes in {\rm \eqref{P1P3}}.  By assumption, $p \equiv 1 \mymod{8}$, so $4 \vert h_2$.  The next result also holds for $g = 1$, but more precise information is in \cite{Neu,Set}.

\begin{theo}  \label{A[2]bloc}
Assume that $A$ satisfies {\rm \eqref{AIntro}}, $g \ge 2$ and $\End (A) = \Z$.  Then $2g+2 \le h_2$, $B[2]$ is a $(g,\Phi)$-bloc for some $\Q$-isogenous abelian variety $B$ and $p$ is in {\bf P1}.   In addition, if $2g+4 \le h_2$, then $p$ is in ${\bf P1^*}$.  In particular, if $g=2$, then $p$ is in ${\bf P1^*}$.   
\end{theo}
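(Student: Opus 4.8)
The plan is to combine the isogeny tower of Notation \ref{Isogenies}--Lemma \ref{TowerLemma} with the irreducibility of $V_2(A) = T_2(A)\otimes\Q_2$ as a Galois module, and then to play $B[4]$ against Proposition \ref{PhiPhid} for the refinement to ${\bf P1^*}$. First I would use Lemma \ref{MultMirage} to replace $A$ by a $\Q$-isogenous abelian variety, still denoted $A$, such that $A[2]$ has no subgroup scheme isomorphic to $\Mu_2$; this preserves \eqref{AIntro} and the hypothesis $\End_\Q(A)=\Z$, and since $A$ is good at $2$ we have $|A[2]^m|=|A[2]^{et}|$, so $A[2]$ is balanced. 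Choosing $x\in T_2(A)^m$ with $x\notin 2T_2(A)^m$ (so $x_1\neq 0$) and forming the tower $A=A_0\to A_1\to\cdots$, Lemma \ref{TowerLemma} gives that each $A_n[2]$ is balanced, that $\kappa_n\cong\Phi_{d_n}$ for some $d_n$ with $1\le d_n\le g$, and that $|V_n|=2^{2(d_0+\cdots+d_{n-1})}$.

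Next I would show $d_n=g$ for all large $n$. Since $A$ is semistable, $\End_{\ov\Q}(A)=\End_\Q(A)=\Z$ by \cite{Rib1}, and Faltings' theorem gives $\End_{G_\Q}(V_2(A))=\Q_2$ together with semisimplicity of $V_2(A)$; a semisimple module whose endomorphism ring is a field is simple, so $V_2(A)$ is an irreducible $\Q_2[G_\Q]$-module, hence an irreducible $\Q_2[G_\infty]$-module. Therefore $\Q_2[G_\infty]\,x=V_2(A)$, and $M_x:=\Z_2[[G_\infty]]\,x$ is a $\Z_2$-submodule of $T_2(A)$ of full rank $2g$. Writing $2^c=|T_2(A)/M_x|$, for $n\ge c$ one has $V_n=M_x/2^nT_2(A)$ of order $2^{2gn-c}$, so $2(d_0+\cdots+d_{n-1})=2gn-c$; taking successive differences forces $d_n=g$ for all large $n$. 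Fixing such an $n$ and setting $B=A_n$, the subgroup scheme $\kappa_n\cong\Phi_g$ has $\F_2$-dimension $2g=\dim_{\F_2}A_n[2]$, so $B[2]=\kappa_n$ is a $(g,\Phi)$-bloc with $B$ isogenous to the original $A$. By Theorem \ref{PhiThm}(i) its mere existence yields $2(g+1)\le h_2$, and by Theorem \ref{PhiThm}(ii) (here $g\ge 2$) we get $8\mid h_2$ and $p\in{\bf P1}$.

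For the last assertion I would argue by contradiction: assume $2(g+2)\le h_2$ but $p\notin{\bf P1^*}$. Since $p\in{\bf P1}$ and $p\equiv 1\mymod 8$, the description in \eqref{P1*} forces $p\equiv 9\mymod{16}$, so Proposition \ref{PhiPhid} (with $d=g$) applies and asserts that there is no extension of $\Phi$ by $\Phi_g$ of exponent $4$. But $B[4]$ supplies one. Multiplication by $2$ on $B[4]$ factors as $B[4]\xrightarrow{q}B[4]/B[2]\xrightarrow{\beta}B[2]$ with $\beta$ an isomorphism of group schemes over $\Z[1/p]$, and $B[4]/B[2]\cong B[2]\cong\Phi_g$ contains a subgroup scheme $\Phi'\cong\Phi$. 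Put $P=q^{-1}(\Phi')\subseteq B[4]$. Then $P$ lies in $\cA$, being a closed subgroup scheme of $B[4]$, which is in $\cA$ because $B$ is semistable and $\Q(B[4])\subseteq L_\infty$ is a $2$-extension of $\Q$; moreover $0\to B[2]\to P\to\Phi\to 0$ with $B[2]\cong\Phi_g$, and $2P=\beta(q(P))=\beta(\Phi')\neq 0$ while $4P=0$, so $P$ has exponent exactly $4$. This contradicts Proposition \ref{PhiPhid}, hence $p\in{\bf P1^*}$.

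The main obstacle is the second step: one must be confident that Faltings' theorem genuinely forces $V_2(A)$ to be $G_\infty$-irreducible, so that a single vector $x\in T_2(A)^m$ generates a $\Z_2$-submodule of full rank, and then that the identity $|V_n|=2^{2(d_0+\cdots+d_{n-1})}$ together with the stable growth $|V_n|=2^{2gn-c}$ really pins $d_n$ to $g$ rather than merely bounding it below. The remaining points — that $B[4]\in\cA$, that $P$ has the claimed shape, and that its exponent is exactly $4$ — are routine verifications.
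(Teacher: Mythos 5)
Your proposal is correct, and most of it coincides with the paper's proof: the initial reduction via Lemma \ref{MultMirage} to a balanced $A[2]$, the tower of Notation \ref{Isogenies} and Lemma \ref{TowerLemma}, the conclusion $2(g+1)\le h_2$ and $p\in{\bf P1}$ once some $\kappa_n\simeq\Phi_g$ fills up $A_n[2]$, and the final step pitting the exponent-$4$ subgroup scheme $P=\{x\in B[4]:2x\in\kappa\}$ against Proposition \ref{PhiPhid} (your $P=q^{-1}(\Phi')$ is literally the paper's $U$). The one genuinely different ingredient is how you force $d_n=g$. The paper uses Faltings' finiteness of the isogeny class to obtain an isomorphism $\iota\colon A_{n+k}\xrightarrow{\sim}A_n$, notes that the endomorphism $\theta=\iota\circ\psi_{n+k}\cdots\psi_{n+1}$ of $B=A_n$ is multiplication by an integer divisible by $2^k$ (Lemma \ref{TowerLemma}(iv)), and compares $\vv{\ker\theta}\ge 2^{2kg}$ with $\vv{\ker\theta}=2^{2(d_n+\cdots+d_{n+k-1})}$. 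You instead use the Tate-conjecture half of Faltings: semisimplicity together with $\End_{G_\Q}(V_2(A))=\Q_2$ makes $V_2(A)$ irreducible, so $\Z_2[[G_\infty]]x$ has finite index $2^c$ in $T_2(A)$, giving $\vv{V_n}=2^{2gn-c}$ for $n\ge c$ and hence $d_n=g$ by differencing. Both are sound; yours is arguably cleaner in the typical case (no matching of isomorphic terms in the tower is needed, and it exhibits an explicit $n$ that works), while the paper's kernel-counting version has the advantage of transporting almost verbatim to the RM setting of Theorem \ref{RMThm}, where $V_2(A)$ is not $\Q_2$-irreducible and your argument would have to be redone prime-by-prime over $E\otimes\Q_2$. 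The routine verifications you defer ($B[4]\in\cA$, exactness of $0\to B[2]\to P\to\Phi\to 0$, exponent exactly $4$) all go through as you indicate.
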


\begin{proof}
By Lemma \ref{MultMirage} and possible relabeling, assume that $A[2]$ has no subgroup scheme isomorphic to $\Mu_2$, so $A[2]$ is balanced.  For the varieties $A_j$ isogenous to $A$ in Notation \ref{Isogenies}, the theorem of Faltings implies that there is an isomorphism $\iota\!: \, A_{n'} \xrightarrow{\sim} A_n$ with $n' > n \ge 0$.  Then  $\theta=\iota \psi_{n,n'}$ is an endomorphism of  $B = A_n.$ By assumption, $A$ (so also $B$) has only trivial endomorphisms, so $\theta$ is multiplication by an integer.  Since $x_{n'}$ is in $V_{n'} = \ker f_{n'}$ by construction, we find that
$$
 f_n(\theta x_{n'}) = \theta f_n(x_{n'})  = \iota \psi_{n,n'} f_n(x_{n'}) = \iota f_{n'}(x_{n'}) = 0,
$$
so $\theta x_{n'}$ is in $\ker f_n = V_n \subseteq A[2^n]$.   By Lemma \ref{TowerLemma}(iv), $2^k$ divides $\theta$ for $k = n'-n$ and therefore $\vv{\ker \theta} \ge 2^{2kg}$.   Lemma \ref{TowerLemma} also implies that
\begin{equation} \label{kerTheta}
\vv{\ker \theta} = \vv{\ker \psi_{n,n'}} = \vv{V_{n'}}/ \vv{V_n} = 2^{2(d_n+ \dots + d_{n'-1})}.
\end{equation}
Hence $d_j = g$ for all $j =n, \dots, n'-1$ and so $B[2] = \kappa_n$ is a $(g,\Phi)$-bloc.  Furthermore, $2g+2 \le h_2$ and $p$ is in {\bf P1} by Theorem \ref{dPhiThm}.  

Since $B[2]$ is a $(g,\Phi)$-bloc, there is a closed subgroup scheme $\kappa$ of $B[2]$ isomorphic to $\Phi$.  The closed subgroup scheme $U$ of $B[4]$ associated to the Galois module 
$
\{y \in B[4] \, : \,  2y \in \kappa(M) \}
$
is a non-trivial extension of $\kappa \simeq \Phi$ by $B[2]$:
$$
0 \to B[2] \to U \xrightarrow{2} \kappa \to 0.
$$
Assume that $2g+4 \le h_2$.  Since $U$ has exponent 4, existence of this extension contradicts Proposition \ref{PhiPhid}(i) unless $p \equiv 1 \mymod{16}$.  But we already know that $p$ is in {\bf P1}, so $p$ is in ${\bf P1^*}$ by \eqref{RemP1*}.  The inequality $2g+4 \le h_2$ automatically holds when $g = 2$ because $8 \vert h_2$.
\end{proof}

\begin{Rem} \label{RMExamples}
The database \cite{LMF} provides the splitting of the space of newforms $S_2(p)$ into Galois orbits for primes $p < 10^6$.  An orbit of size $g$ corresponds to a simple, semistable abelian subvariety $A$ of $J_0(p)$ of dimension $g$ with endomorphisms by an order $\go$ in a totally real field $L$ of degree $g$.  According to \cite[Ch. 2, \S17]{Maz}, if $A$ is prosaic, then it is isogenous to a factor of the Eisenstein quotient of $J_0(p)$.  Upon examining such splittings with $2 \le g \le 20$ and $p< 10^6$, we found no prosaic examples, with the following three exceptions, up to isogeny:
\renewcommand{\arraystretch}{1.2}   
$$
\begin{array}{| c || c | c | c | c | }
\hline
p=a^2+16b^2 & a, b & g = \dim A &  \text{RM} & h_2 \\
\hline
\, 41 & 5,1 & 3 & \text{cubic} & 8  \\
\hline
113  & 7, 2 & 2 & \sqrt{3} & 8  \\
\hline
1201  & 25, 3 & 2 & \sqrt{2} & 16  \\
\hline
\end{array}
$$
\renewcommand{\arraystretch}{1}

\noindent The cubic field for $J_0(41)$ has discriminant $4 \cdot 37$ \cite{Web}.

To test the prosaic property, we used the Fourier coefficients $a_\ell$ of the cusp forms given by the database for odd primes $\ell < 100$.  Let $\gp$ be a prime over 2 in $\go$, let $F = \Q(A[\gp])$ and let
$
\bar{\rho}_\gp\!: \, \Gal(F/\Q) \to \GL_2(\go/\gp)
$
be the corresponding mod $\gp$ representation. If $\varphi_\ell$ is a Frobenius at $\ell \nmid 2N$, then $\Tr(\bar{\rho}_\gp(\varphi_\ell)) \equiv a_\ell \pmod{\gp}$.  If there is some $\ell$ and some prime $\gp$ with $a_\ell$ not in $\gp$, then $A$ is not prosaic.
For the three exceptional cases above, there is unique prime $\gp$ over 2 in $\go$ and $\go/\gp \simeq \F_2$.  These examples are prosaic, thanks to the following observation.
\end{Rem}

\begin{lem} 
Let $F/\Q$ be a Galois extension, unramified outside $\{2, p, \infty\}$, with $p \equiv 1\mymod{8}$ and $[F:\Q]=3$ or $6$.  Assume that
\begin{enumerate}[{\rm i)}] 
\item the ramification index of each prime $\gp$ over $p$ in $F$ divides $2$ and
\item the primes $\gq$ over $2$ in $F$ satisfy the Fontaine bound \eqref{FontBd} with $n=1$.
\end{enumerate} 
 Then the class number of $\Q(\sqrt{p})$ or $\Q(\sqrt{-p})$ is a multiple of $3$.  
\end{lem}

\begin{proof}  
The fixed field $k$ of the cubic subgroup of $\Gal(F/\Q)$ is at most quadratic over $\Q$.  Since finite places outside $2p$ are unramified, (ii) implies that $k$  is contained in $\Q(i,\sqrt{p})$.  By (i), $F/k$ is unramified at all $\gp \vert p$.  At each prime over 2 in $k$, the residue field is $\F_2$, so local class field theory shows that $F/k$ also must be unramified at each $\gq$.  It follows that $k = \Q(\sqrt{\pm p})$, with class number a multiple of 3. 
\end{proof} 

\section{Prosaic families, Richelot isogenies and mild reduction}  \label{Examples}

Theorem \ref{1Mod8} gives a lower bound $g$ for the number of prime factors in the conductor of a prosaic abelian variety $A$ of dimension $g$. This bound is attained when $g=2$: we exhibit several such families of genus 2 curves 
\begin{equation}  \label{curveC}
C\!: \, y^2 + Q(x) \, y = P(x)
\end{equation}
whose Jacobian $A=J(C)$ is prosaic and typical.  They provide, under the Schinzel hypothesis, infinitely many surfaces  of conductor $pq$ for distinct primes $p,\,q$.

The splitting field $L$ of $F(x) = Q(x)^2+4P(x)$ is the 2-division field of $A$.  To ensure that $\Gal(L/\Q)$ is a 2-group, we assume a factorization 
\begin{equation} \label{FacF}
F(x) = f(x) \, h(x) \, \ov{h}(x) \quad \text{with} \quad G(x) = h(x) \, \ov{h}(x),
\end{equation} 
where $\deg f = 1$ or 2 and $h$ and $\ov{h}$ are conjugate quadratics over $\Z[i]$.  According to \cite[\S2]{Liu1}, the discriminant of the model $\eqref{curveC}$ is 
\begin{equation} \label{discC}
\Delta_C = \begin{cases} c_0^2 \Delta_F/2^{12}  &\text{if } \deg F = 5,\\[2 pt]
                                        \hfill \Delta_F/2^{12}  &\text{if } \deg F = 6,                      
                   \end{cases}
\end{equation}
where $\Delta_F$ is the discriminant and $c_0$ is the leading coefficient of $F$.   The discriminants and resultants of the polynomials in \eqref{FacF} are related by
\begin{equation}\label{discF}
 \Delta_F = \Delta_f \, \Delta_h \, \Delta_{\ov{h}} \, \res(h,\ov{h})^2 \, \res(f, G)^2.
\end{equation}

 The divisor classes of order 2 in $J(C)$ corresponding to $h(x) = 0$ and $\bar{h}(x) = 0$ generate a rational $(2,2)$-subgroup $\kappa$ of $A[2]$ which is totally isotropic for the Weil pairing and serves as the kernel of a {\em Richelot isogeny} to a Jacobian $J(C')$.  For each family of examples, we give a model for $C'$, utilizing the factorization \eqref{FacF}.  For more details about Richelot isogenies and additional references, see \cite[Ch.\! 9]{CF} or \cite[Ch.\! 8]{Smi}.  In \cite{CF}, the factorization of $F(x)$ into three quadratic polynomials is assumed to occur over the ground field.  If two of the quadratic factors are conjugate over $\Q(\sqrt{d})$, the curve $y^2=F'(x)$ in \cite{CF} must be changed to $y^2=dF'(x)$.

Some of our families involve primes $p$ of {\em mild reduction}, namely:  $C$ has bad reduction at $p$, but $J(C)$ reduces to a product of two elliptic curves over $\ov{\F}_p$. As in \cite{BK3}, there is a monic quadratic polynomial $s$ in $\Z_p[x]$ with separable reduction over $\F_p$, such that $P$ and $Q$ in \eqref{curveC} have the form 
\begin{equation} \label{LiuModel}
\begin{array}{l l l}
Q = m(sa_1+ma_3), &  
P = m(a_0s^3 + m a_2s^2 + m^2 a_4 s + m^3 a_6),  \\[3 pt]
m \in p\Z_p, \hspace{7 pt} a_0 \in \Z_p^\times,  &a_{j} \in \Z_p[x], \hspace{3 pt} \deg a_{j} = 1 \text{ for } j = 1, \dots, 6.
\end{array}
\end{equation}
Let $r_1$ and $r_2$ be the roots of $s(x)$, possibly in an unramified quadratic ring over $\Z_p$.  If, for $j = 1,2$, each of the substitutions $x =r_j + m X$, $y = m^2 Y$ reveals an elliptic curve in the naive reduction, then $p$ is a mild prime. Such a model is non-minimal at $p$.  Minimizing the discriminant of $C$ is accomplished by either of these changes of variables when the roots of $s$ are in $\Z_p$.  See \cite{Liu2} for a discussion of minimal models and reduction types. 

When $z^{g+1} Q(x/z)$ is separable over $\F_2$, there is no biconnected subquotient in $A[2]$ over $\Z_2$ and so a natural choice for our first family is $Q(x) = x^2+x+b$.  Let $f(x) = 4x+4a+1$ and $h(x) = Q(x) - 2i$, as a simple way to satisfy the congruence
\begin{equation} \label{mod4}
(4x+4a+1) \, h(x) \, \ov{h}(x) = F(x) \equiv Q(x)^2 \pmod{4 \, \Z[x]}.
\end{equation}
Then $P(x)$ is defined by $4P = F - Q^2$.

Write $\core(n)$ for the product of the distinct prime factors of $n$. 
\begin{prop} \label{AB1}
For integers $a,b$, let $Q(x) = x^2+x+b$ and let $J(C)$ be the Jacobian of the curve {\rm\eqref{curveC}}, with 
\begin{eqnarray}\label{eqn1}
F(x) = (4x+4a+1) \, (Q(x)-2i) \, (Q(x)+2i).
\end{eqnarray}
Set $m = (4b-1)^2+64$ and $n = ((4a-1)^2 + 16 b-4)^2+1024$. 
If $\gcd(m,n)=1$, then $J(C)$ is prosaic, with conductor equal to $\core(mn)$.
\end{prop}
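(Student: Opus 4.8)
The plan is to compute the model discriminant $\Delta_C$ explicitly via \eqref{discC} and \eqref{discF}, read off good reduction at $2$ and the $2$-extension property, and then control the bad reduction prime by prime using the special shape $G=Q^2+4$.

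\emph{Step 1: factorization and discriminant.} With $f(x)=4x+4a+1$, $h(x)=Q(x)-2i$, $\ov h(x)=Q(x)+2i$ one has $G=h\ov h=Q^2+4\in\Z[x]$ and, by the congruence \eqref{mod4}, $4P=F-Q^2\in\Z[x]$, so $C$ is a genuine genus $2$ curve with $\deg F=5$ and leading coefficient $c_0=4$. I would then evaluate the factors of \eqref{discF}: $\Delta_f=1$; since $\ov h-h=4i$ is constant, $\res(h,\ov h)=(4i)^2=-16$; since $\Delta_h=1-4b+8i$ and $\Delta_{\ov h}=\ov{\Delta_h}$, $\Delta_h\Delta_{\ov h}=N_{\Q(i)/\Q}(\Delta_h)=(4b-1)^2+64=m$; and $\res(f,G)=4^4\,G(\rho)$ with $\rho=-(4a+1)/4$, where a short computation gives $Q(\rho)=\bigl((4a-1)^2+16b-4\bigr)/16$, hence $G(\rho)=n/256$ and $\res(f,G)=n$. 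This gives $\Delta_F=2^8\,m\,n^2$, and \eqref{discC} yields
$$
\Delta_C=c_0^2\,\Delta_F/2^{12}=2^4\cdot 2^8\,m\,n^2/2^{12}=m\,n^2.
$$
Since $4b-1$ and $(4a-1)^2+16b-4$ are odd, $m$ and $n$ are odd, so $\Delta_C\in\Z_2^\times$: the model is smooth over $\Z_2$ and $J(C)$ has good reduction at $2$. Moreover $\Q(J(C)[2])$ is the splitting field of $F=f\,h\,\ov h$, which lies in the tower $\Q\subset\Q(i)\subset\Q(i,\sqrt{\Delta_h})\subset\Q(i,\sqrt{\Delta_h},\sqrt{\Delta_{\ov h}})$ of quadratic steps, so $\Gal(\Q(J(C)[2])/\Q)$ is a $2$-group.

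\emph{Step 2: bad primes via $G'=2QQ'$ and the conductor.} Let $q$ be an odd prime. A repeated root of $G\bmod q$ must satisfy $Q\equiv0$ — impossible, since then $G\equiv4$ — or $Q'\equiv0$, i.e. $x\equiv-\tfrac12$; and $x=-\tfrac12$ is a root of $G\bmod q$ exactly when $q\mid G(-\tfrac12)=m/16$, i.e. when $q\mid m$, in which case it is an \emph{exact} double root because $G''(-\tfrac12)=4b-1\not\equiv0$ (otherwise $q\mid\gcd(4b-1,m)\mid64$). Now use $\gcd(m,n)=1$. If $q\mid m$, then $q\nmid n=\res(f,G)$, so $\rho$ is not a root of $G\bmod q$ (in particular $\rho\not\equiv-\tfrac12$), and $F\bmod q=f\cdot G$ has the single exact double root $x=-\tfrac12$, otherwise separable. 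If $q\mid n$, then $q\nmid m$, so $G\bmod q$ is separable and $\rho$ is a simple root of it, whence $F\bmod q$ has the single exact double root $x=\rho$, otherwise separable. In either case $y^2=F(x)$ acquires exactly one node over $\ov\F_q$, whose normalization is a smooth connected genus $1$ curve; hence the stable reduction of $C$ at $q$ is an irreducible nodal curve of geometric genus $1$, the toric rank of the reduction of $J(C)$ is $1$, and so $J(C)$ is semistable at $q$ with conductor exponent $1$. Since the odd bad primes are precisely those dividing $\Delta_C=mn^2$, i.e. those dividing $mn$, and $2$ is a prime of good reduction, $J(C)$ is semistable everywhere with conductor $\prod_{q\mid mn}q=\core(mn)$; combined with Step 1, $J(C)$ is prosaic.

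I expect the main obstacle to be Step 2 — showing that at \emph{every} bad prime the reduction is exactly as mild as a single transverse node, so that no conductor exponent exceeds $1$. The structural identity $G=Q^2+4$, hence $G'=2QQ'$, is what confines all collisions among the roots of $G$ to $x=-\tfrac12$, and the hypothesis $\gcd(m,n)=1$ is precisely what keeps the root $\rho$ of $f$ from landing on that double root; together these rule out triple collisions and force each conductor exponent to equal $1$. The discriminant bookkeeping of Step 1 is routine, but must be done carefully enough to watch the power of $2$ cancel exactly.
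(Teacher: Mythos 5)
Your proposal is correct and follows essentially the same route as the paper: compute $\Delta_C=mn^2$ from \eqref{discC}--\eqref{discF}, split into the cases $q\mid m$ and $q\mid n$ using $\gcd(m,n)=1$ to show $F\bmod q$ has exactly one transverse double root (hence one node, semistable with conductor exponent $1$), and observe that the splitting field of $F$ is a $2$-extension. The only cosmetic difference is in the case $q\mid m$, where the paper locates the double root by passing to a prime $\pi\mid\ell$ of $\Z[i]$ and using $\gcd(\Delta_h,\Delta_{\ov h})=1$, while you pin it at $x=-\tfrac12$ via the identity $G'=2QQ'$; both yield the same conclusion.
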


\begin{proof}
If $h(x) = Q(x) - 2i$ and $G(x) = h(x) \, \bar{h}(x)$, then 
$$
\res(h,\bar{h}) = -16, \quad \gm = \Delta_h = 1-4b+8i, \quad \hspace{4 pt}
\res(4x+4a+1,G) = n.  
$$
Since $m = \gm \, \ov{\gm}$, formulas \eqref{discC} and \eqref{discF} give $\Delta_F = 2^8mn^2$ and $\Delta_C = mn^2$. 

Assume that $\gcd(m,n) = 1$ and let $\ell$ be a prime of bad reduction for $C$.   Necessarily, $\ell$ splits in $\Z[i]$ and a choice of prime $\pi$ over $\ell$ in $\Z[i]$ determines the image of $F(x)$ in $\F_\ell[x]$.   

Case 1:  $\ell \vert m$.  By symmetry, we may assume that $\pi \vert \gm$, so $h$ has a double root modulo $\pi$, say $x=\alpha$ in $\F_\ell$.  Since $\pi$ does not divide
$$
\gcd(\Delta_h, \Delta_{\bar{h}}) = \gcd(1-4b+8i,1-4b-8i) = 1,
$$
$\alpha$ is not a root of $\bar{h}$.  But $m$ and $n$ are relatively prime, so $\pi$ does not divide $\res(4x+4a+1,G) = n$.  Thus, $\alpha$ is not a root of $4x+4a+1$ over $\F_\ell$.  Hence $\alpha$ is a node of $C$ over $\F_\ell$.

\vspace{2 pt}

Case 2:  $\ell \vert n$.  Then $\beta = -(4a+1)/4$ is a common root of  $4x+4a+1$ and $G(x)$ modulo $\ell$.  Since $\ell$ does not divide 
$$
\Delta_G  = \Delta_h \, \Delta_{\bar{h}} \, \res(h,\bar{h})^2 = 256m,
$$
$\beta$ is a simple root of $G$  modulo $\ell$ and therefore $\beta$ is a node of $C$ over $\F_\ell$.

Hence $J(C)$ is semistable, with squarefree conductor $\core(mn)$. The splitting field of $F$ is $L=\Q(i, \sqrt{m}, \sqrt{1-4b+8i})$. It is a $\cD_4$-field unless $b=4$, in which case $L=\Q(i)$ and $n$ is a reducible polynomial of $a$.
\end{proof}

\begin{Rem}
The Richelot-isogenous curve for $J(C)$ in Proposition \ref{AB1} is isomorphic to $C'\!: \, y^2=P'(x)$, with $Q'=0$,  $P'=x h'\bar{h}'$ and
$$
h' = x^2+(1-4a)x+(1-4b) -8i.
$$
The discriminant of $C'$ is $\Delta_{C'} = 2^{24}m^2 n$ and 2 is a mild prime for $C'$  
\end{Rem}

\begin{Ex}
Some cases of Proposition \ref{AB1} with $m$ and $n$ prime. 
{\small $$
\begin{array}{| r | r | r | r ||  r | r | r | r || r | r | r | r  || }
\hline
a&b&m&n&a&b&m&n&a&b&m&n\\
\hline 
 0& 1& 73& 1193 & -1& -1& 89& 1049& -1& 1& 73& 2393  \\
 2& -3& 233& 1033&  1& 2& 113& 2393&  -1& -3& 233& 1753\\
  -2& -4& 353& 1193 & -1& 2& 113& 3833&3& -4& 353& 3833 \\
 3& 1& 73& 18713&  3& -9& 1433& 1753& -3& -8& 1153& 2393\\
  4& -14& 3313& 1033& -3& -3& 233& 14713& 4& -1& 89& 43049\\
 \hline
\end{array}
$$}
\end{Ex}

\vspace{5 pt}

Assuming the  Schinzel hypothesis, $m$ and $n$ simultaneously are primes infinitely often.  The resulting examples cannot be isogenous to the product of elliptic curves.  Otherwise $n$  is also a Setzer-Neumann prime, i.e., of the form $v^2+64$.  But if $n = w^2 + 1024=v^2+64$ with $v,w$ in $\Z$, then 
$$
(w+32i)(w-32i)=(v+8i)(v-8i)
$$ 
violates unique factorization in $\Z[i]$. Note that $J(C)$ and $J(C')$ are semistable with squarefree conductors, so they are typical, thanks to \cite{Rib1}.

\begin{Ex} \label{Ex2}
As a variant of Proposition \ref{AB1}, let $c \equiv 1 \mymod{\text{mod } 4}$, $b = \pm 1$ and $n = (c^2 - 4bc-16)^2 + 64c^2$.  Assume that $17 \nmid n$ and let $C$ be given by \eqref{curveC}, with 
$$
Q = x^2 + b x - 1,  \quad  h = Q-2ix, \quad  F = (4x+c) h \bar{h}.
$$  Then $\Delta_C = 17 n^2$ and $J(C)$ is prosaic and typical, with conductor  $\core(17n)$.  The Richelot-isogenous curve is isomorphic to $C'\!: \,  y^2 + Q' y =P'$, where $Q'=x(x^2+x+b)$, 
$$
h'(x)= x^2 + cx + bc+4 +2c i \hspace{5 pt} \text{ and } \hspace{5 pt}F'(x)=c(x^2+4)h'(x)\bar{h}'(x). 
$$
The discriminant $\Delta_{C'} = -17^2 c^{22}$.  Primes $p \ne 17$ dividing $c$ are mild for the curve $C'$.  If $17 \vert c$ then, in the mild model at 17, exactly one of the naive reductions has a node and the other is an elliptic curve.  The following are  examples of conductor $17n$ with $n$ prime:
$$
\begin{array}{| r | r | r || r |  r | r || r | r | r || r | r | r  | }
\hline
b&c&n&b&c&n&b&c&n&b&c&n\\
\hline 
 1& -3& 601& -1 & -3& 937&1& 5&1721& -1& 5& 2441  \\
 1& -7& 6857& 1&  13& 21017& 1& 17&60521& -1&17& 134777\\
\hline
\end{array}
$$
\end{Ex}

\vspace{5 pt}

\begin{Ex}   \label{MildProsaics}
Our last family consists of genus 2 curves with prosaic, typical Jacobian and some  mild reduction.   Let $u$ be odd, $c \equiv 1 \mymod{4}$,
$$
\begin{array} {l l l}
m=64u^2+1, & &n = (c u - 16)^4 + 64 c^2, \\[3 pt]
s = x(x-1), & &h = u (x+1)^2 - s i
\end{array}
$$
and assume that $\gcd(um,n) = 1$.  If $Q= 2u (ux - x - u) s$ and 
$$
F(x) =4 u  x  (c  u  x - 8 x + 8)  h  \bar{h}, 
$$
the curve $C$ has the form \eqref{LiuModel} with $\Delta_C = (2u)^{22} \,m n^2$.  Primes dividing $2u$ are mild and the conductor of $J(C)$ is $\core(mn)$.  The Richelot-isogenous curve has a model of the form $C'\!: \, y^2 + Q' y =P'$, with
$$
Q'=c^2 x^2 (x+u), \quad h' = c x^2 - c u x + 16 x - 2 i,  \quad F' = c (c x+16) h' \bar{h}'
$$
and minimal discriminant $\Delta_{C'} = (2c)^{12}\, m^2 n$.  Primes dividing $2c$ are mild.  Some examples with $m$ and $n$ prime follow.
{\small $$
\begin{array}{| r | r | r | r ||  r | r | r | r || r | r | r | r  | }
\hline
u&c&m&n&u&c&m&n&u&c&m&n\\
\hline 
 -5& -3&1601&577 & -3& -7& 577& 3761& -7& -3& 3137& 1201  \\
 5& 5& 1601& 8161& 3& 13& 577& 290657&  -3& 5& 577& 925121\\
 \hline
\end{array}
$$}
\end{Ex}

\vspace{5 pt}

\begin{Rem} \label{1797}
Among typical prosaic Jacobian surfaces, $17\cdot97$ is the smallest conductor known to us \cite{BK3}.  The isogeny class is represented by $J(C)$, where $C$ is given by \eqref{curveC}, with
$$
\begin{array}{l l}
P = x^4 + x^3 + x^2 - 3x + 1, &\quad Q = (x+1)(x^2+x+1) \\[4 pt]
\multicolumn{2}{c}{F = (x^2 + 2x + 5)(x^2 + x - 1+2i  x)(x^2 + x - 1 - 2 i x ).}
\end{array}
$$
The Richelot-isogenous curve is isomorphic to 
$$
C': y^2=3(x^2 - 2x + 2)h'\bar{h}',
$$
where $ h'=(x^2-5x-1)+2(x^2+ x-1)i$ and $Q' = 0$.  The primes 2 and 3 are mild and $\Delta_{C'}=-6^{22}\cdot17^2\cdot97$.
Neither $C$ nor $C'$ seems to belong to a  family such as those above. 
\end{Rem}

%%%%%%%%%%% To ease editing, use normal size for the references:

%% Use the widest label as parameter above.
%% Reference items can be numbered or have labels of your choice, as below.
%% Arrange the items in the alphabetical order of names (and not in the order of labels).

%% In IMPAN journals, only the title is italicized; boldface is not used.
%% Do NOT give the issue number unless the issues are paginated separately, as in Uspekhi below.

%% To ease editing, add:

\normalsize
\baselineskip=17pt

%%%%%%%%%%%%%

\vspace{5 pt}

\noindent{\bf Errata in \cite{BK2}}.
In \cite[Cor.~3.6]{BK2}, we asserted without full justification, that if an abelian variety $A$ satisfies \eqref{AIntro} then $p \equiv 1 \mymod{8}$.  The argument is completed here, in Theorem \ref{1Mod8}.  In addition, \cite[p. \!15, line $-6$]{BK2} should say that $\Q(J_0(41)[2])/\Q$ is a 2-extension and $\Q(J_0(31)[5])/\Q(\Mu_5)$ is a 5-extension, as is explained correctly, with more details, in \cite[p. \!35]{BK2}.

\vspace{5 pt}

\subsection*{Acknowledgement}
Research of the second author was partially supported by the PSC-CUNY Research Award Program.

\vfill
\end{document}